\documentclass[11pt]{amsart}
\usepackage{preamble}

\title[Artin's Conjecture in Number Fields and For Matrices]{Artin's Primitive Root Conjecture in Number Fields and For Matrices}

\author{Noam Kimmel}
\address{N. Kimmel: Raymond and Beverly Sackler School of Mathematical Sciences, Tel Aviv University, Tel Aviv 69978, Israel.}
\email{\href{mailto:noamkimmel@mail.tau.ac.il}{noamkimmel@mail.tau.ac.il}}
\thanks{This research was supported by the European Research Council (ERC) under the European Union's  Horizon 2020 research and innovation program  (Grant agreement No.    786758).}
\subjclass{11A07, 11R04, 15B36}
\keywords{Primitive roots, Algebraic numbers, Integer matrices}

\begin{document}

\maketitle 

\begin{abstract}
In 1927, E. Artin conjectured that all non-square integers $a\neq -1$ are a primitive root of $\mathbb{F}_p$ for infinitely many primes $p$.
In 1967, Hooley showed that this conjecture follows from the Generalized Riemann Hypothesis (GRH).
In this paper we consider variants of the primitive root conjecture for number fields and for matrices.
All results are conditional on GRH.

For an algebraic number field $K$ and some element $\alpha \in K$, we examine the order of $\alpha$ modulo various rational primes $p$.
We extend previous results of Roskam which only worked for quadratic extensions $K/\mathbb{Q}$ to more general field extensions of higher degree.
Specifically, under some constraints on the Galois group of $K/\mathbb{Q}$ and on the element $\alpha\in K$, we show that $\alpha$ is of almost maximal order mod $p$ for almost all rational primes $p$ which factor into primes of degree 2 in $K$.

We also consider Artin's primitive root conjecture for matrices.
Given a matrix $A\in\text{GL}_n(\mathbb{Q})$, we examine the order of $A\bmod p$ in $\text{GL}_n(\mathbb{F}_p)$ for various primes $p$, which turns out to be equivalent to the number field setting.
\end{abstract}

\setcounter{tocdepth}{1}
\tableofcontents

\newpage
\section{Introduction}
\subsection{Artin's primitive root conjecture}

It was conjectured by Artin that every integer $a\in\ZZ$ such that $a\neq-1$ and $a$ is not a perfect square, is a primitive root of $\FF_p$ for infinitely many primes $p$.
In 1967 Hooley showed that this follows from the Generalized Riemann Hypothesis (GRH) \cite{hooley}. 
In fact, Hooley showed that assuming GRH, the number of primes $p\leq X$ for which $a$ is a primitive root is asymptotic to $c\li{X}$ where $c$ is a constant depending on $a$.

Instead of showing that $a$ is a primitive root for a positive proportion of the primes, it is sometimes useful in applications to show that $a$ has almost maximal order in $\FF_p^\times$ for almost all primes $p$.
Hooley's method in \cite{hooley} can be adapted to tackle this version as well.
This was done by Erd\"os and Murty \cite[Theorem 4]{erdos1999order} which gives
\begin{theorem}\label{thm:1.1}
Let $a\in\ZZ$, $a\neq \pm 1$, and let $f(x)$ be some function tending to infinity with $x$.
Then assuming GRH:
$$
\#\SET{p\leq X \;\middle| \; \ord{p}{a} \leq \frac{p-1}{f(X)}} = o(\li{X}).
$$
\end{theorem}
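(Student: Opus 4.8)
The plan is to adapt Hooley's conditional argument for the primitive root conjecture, in the quantitative form due to Erd\H{o}s and Murty.

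First I would make two harmless reductions: since the set in the statement only grows when $f$ is made smaller, it suffices to treat the case where $f$ is nondecreasing and $f(X)\leq\log X$. Fix a prime $p$ with $p\nmid a$ and write $i_p:=(p-1)/\ord{p}{a}$ for the index of $\langle a\rangle$ in $\FF_p^\times$; then $a$ is an $i_p$-th power residue mod $p$, and because $\FF_p^\times$ is cyclic, for every divisor $m\mid i_p$ the rational prime $p$ splits completely in the (Galois) field $K_m:=\mathbb{Q}(\zeta_m,a^{1/m})$, the splitting field of $x^m-a$. A prime $p\leq X$ contributes to the count precisely when $i_p\geq f(X)$; for such $p$ let $d_p$ be the least divisor of $i_p$ with $d_p\geq f(X)$, so that $f(X)\leq d_p\leq f(X)\cdot P(d_p)$ where $P(n)$ denotes the largest prime factor of $n$ (otherwise $d_p/P(d_p)$ would be a smaller admissible divisor), and $p$ splits completely in $K_{d_p}$. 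The analytic input is the effective Chebotarev theorem under GRH: for every $m\geq1$,
\[
\#\{p\leq X:\ p\ \text{splits completely in}\ K_m\}=\frac{\li{X}}{[K_m:\mathbb{Q}]}+O\!\left(\sqrt{X}\,m^2\log(mX)\right),
\]
where, since $a$ is fixed, $[K_m:\mathbb{Q}]=m\phi(m)/\varepsilon_a(m)$ with $\varepsilon_a(m)$ bounded in terms of $a$, so that $[K_m:\mathbb{Q}]\gg_a m\phi(m)\gg_a m^2/\log\log m$. The decisive observation is that $\sum_{m\geq f(X)}[K_m:\mathbb{Q}]^{-1}\ll_a\sum_{m\geq f(X)}(\log\log m)/m^2\ll_a(\log\log f(X))/f(X)\to0$ as soon as $f(X)\to\infty$; this is what makes the argument work for arbitrarily slowly growing $f$.

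Next I would split the contributing primes according to the size of $P(d_p)$ relative to a threshold $Y:=\log\log X$. In the first case $P(d_p)\leq Y$, so $d_p$ is $Y$-smooth and $f(X)\leq d_p\leq f(X)Y\leq(\log X)(\log\log X)$; summing the displayed estimate over the admissible values $m=d_p$, the main terms contribute $\ll_a\li{X}\cdot(\log\log f(X))/f(X)=o(\li{X})$ (bounding wastefully by $\sum_{m\geq f(X)}$), while the error terms contribute $\ll\sqrt{X}\,\bigl((\log X)(\log\log X)\bigr)^3\log X=o(\li{X})$ because $d_p$ is confined to a short initial segment. In the second case $P(d_p)=:\ell>Y$; then $\ell\mid i_p\mid p-1$ and $a\in(\FF_p^\times)^\ell$, so $p$ splits completely in $K_\ell$ for some prime $\ell\in(Y,X]$, and it suffices to bound $\sum_{Y<\ell\leq X}\#\{p\leq X:\ p\ \text{splits completely in}\ K_\ell\}$. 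Here I would further divide the range of $\ell$: for $Y<\ell\leq X^{1/7}$ the GRH estimate applies, the main terms summing to $\ll_a\li{X}\sum_{\ell>Y}\ell^{-2}\ll_a\li{X}/(Y\log Y)=o(\li{X})$ and the error terms to $\ll\sqrt{X}\,X^{3/7}\log X=o(\li{X})$; for $\ell>X^{1/2}\log X$, writing $m:=(p-1)/\ell<X^{1/2}/\log X$ one has $a^m\equiv1\pmod p$ with $p\mid a^m-1$ and $p>\ell>X^{1/2}$, so for each fixed $m$ there are $\ll_a m/\log X$ such primes and the total is $\ll_a(\log X)^{-1}\sum_{m<X^{1/2}/\log X}m=o(\li{X})$.

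The remaining intermediate range $X^{1/7}<\ell\leq X^{1/2}\log X$ is the main obstacle, exactly as in Hooley's original work: the GRH error term $\sqrt{X}\,\ell^2\log(\ell X)$ is then too large to sum over $\ell$, while the cofactor $(p-1)/\ell$ is too large for the elementary divisor-counting argument, and using only $\ell\mid p-1$ (via Brun--Titchmarsh) yields a contribution of size $\asymp\li{X}$, not $o(\li{X})$. One must exploit the congruence $\ell\mid p-1$ and the $\ell$-th power residue condition simultaneously, following Hooley's delicate treatment of this range (a further subdivision around $X^{1/2}$, the Brun--Titchmarsh inequality for the progression $1\bmod\ell$, and the power condition used to beat the trivial density $1/\ell$), to conclude that the total contribution of this range is also $o(\li{X})$. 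Assembling the three cases gives the bound. I expect the intermediate range to be by far the most delicate part of the write-up; the rest is the bookkeeping of main terms and GRH error terms sketched above.
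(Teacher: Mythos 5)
Your architecture is the standard Hooley/Erd\H{o}s--Murty one (the paper does not prove \autoref{thm:1.1} itself but cites \cite{erdos1999order}; the same three-range decomposition reappears in its Lemmas 2.4--2.6), and your treatment of the reductions, of the smooth divisor $d_p$, and of the range $\ell > X^{1/2}\log X$ is correct. But the intermediate range $X^{1/7} < \ell \le X^{1/2}\log X$ is a genuine gap, not a routine ``delicate step to be filled in following Hooley.'' The route you gesture at does not exist: there is no elementary or Brun--Titchmarsh-type device that ``beats the trivial density $1/\ell$'' for the condition ``$a$ is an $\ell$-th power residue mod $p$'' when $\ell \approx X^{1/4}$, and as you yourself compute, using only $\ell \mid p-1$ over $(X^{1/7}, X^{1/2}\log X]$ contributes $\asymp \li{X}$. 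Hooley has no special argument for this range; in his proof the Chebotarev range and the Brun--Titchmarsh range meet at $X^{1/2}/\log^2 X$, leaving no gap.

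The missing ingredient is the correct shape of the GRH-effective Chebotarev error term. In the Lagarias--Odlyzko bound the discriminant contribution carries the factor $|C|/|G|$ (see the paper's \autoref{thm:2.1}): for the class $C=\{1\}$ in $K_\ell = \QQ(\zeta_\ell, a^{1/\ell})$ the error is $\BigO{\sqrt{X}\log X + \tfrac{1}{[K_\ell:\QQ]}\sqrt{X}\log|\Delta_{K_\ell}|}$, and since $\log|\Delta_{K_\ell}| \ll [K_\ell:\QQ]\log \ell$ (the paper's \autoref{lem:2.2}), this is $\BigO{\sqrt{X}\log X}$ \emph{uniformly} for $\ell \le X$ --- a saving of a factor of roughly $\ell^2$ over the bound $\BigO{\sqrt{X}\,\ell^2\log(\ell X)}$ you used. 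With this, Chebotarev handles every prime $\ell \le X^{1/2}/\log^2 X$: the error terms sum to $\BigO{X/\log^2 X}$ because there are only $\BigO{\sqrt{X}/\log^3 X}$ primes in that range, while the main terms sum to $\ll \li{X}\sum_{\ell > Y}\ell^{-2} = o(\li{X})$. Brun--Titchmarsh is then needed only on the narrow window $X^{1/2}/\log^2 X < \ell \le X^{1/2}\log X$, where $\sum 1/\ell = \BigO{\log\log X/\log X}$ by Mertens, giving a contribution $\BigO{X\log\log X/\log^2 X} = o(\li{X})$. Substituting this for your intermediate case closes the proof; the rest of your write-up can stand as is.
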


Various generalizations of Artin's conjecture to other algebraic structures have also been studied (see for example the survey \cite{moree2012artin}).
In this paper, we examine two generalizations of \autoref{thm:1.1}.
In \autoref{sec:2} we examine the analogue of \autoref{thm:1.1} in the case of number fields, and in \autoref{sec:3} we examine a generalization to matrices, which we show to be related to the number field case.

\subsection{Extension to number fields}
\autoref{thm:1.1} can be extended to the case of a number field $K/\QQ$ with some element $\alpha\in K$ in two natural ways.
The first is in terms of the primes in $K$.
That is, one can ask whether $\alpha$ is of almost maximal order in $\left(\mathcal{O}_K/\mathfrak{P}\right)^\times$ for almost all primes $\mathfrak{P}$ in $K$ of norm less than $X$.
In this case, the question reduces to linear primes, since almost all primes in $K$ up to norm $X$ are linear.
It turns out that Hooley's original proof for the rational case works in this case as well, with only minor cosmetic changes.
This was done in \cite[Theorem 3.1]{olli}.
We summarize this result in the following proposition.
\begin{proposition}\label{prop:1.2}
Let $f(x)$ be some function tending to infinity with $x$.
Let $K$ be some number field and let $\alpha$ be an element in $K$ which is not a root of unity.
Assuming GRH, we have:
\begin{multline*}
\#\SET{\mathfrak{P} \text{ prime in }K \;\middle|\; \mathcal{N}_{K/\QQ}(\mathfrak{P})\leq X,\; \ord{\mathfrak{P}}{\alpha} \leq \frac{\mathcal{N}_{K/\QQ}(\mathfrak{P}) - 1}{f(X)}} \\
= o(\li{X}).
\end{multline*}
\end{proposition}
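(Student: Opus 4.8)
The plan is to reduce the statement to prime ideals of degree one over $\QQ$ and then to re-run, with only cosmetic changes, the proof of \autoref{thm:1.1} due to Hooley in the form of Erd\H{o}s and Murty, the point being that the argument is insensitive to replacing the base field $\QQ$ by the fixed number field $K$. First I would discard the prime ideals $\mathfrak{P}$ of $K$ with $\mathcal{N}_{K/\QQ}(\mathfrak{P})\le X$ of residue degree at least $2$: each lies above a rational prime $p\le X^{1/2}$, so there are $O_{K}(X^{1/2}/\log X)=o(\li{X})$ of them, and I would also discard the finitely many primes ramified in $K/\QQ$ and those dividing the numerator or denominator of $\alpha$ (write $\alpha=\beta\gamma^{-1}$ with $\beta,\gamma\in\mathcal{O}_K$ coprime). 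For a surviving prime $\mathfrak{P}$ above the rational prime $p$ one has $\mathcal{O}_K/\mathfrak{P}\cong\FF_p$, the order $\ord{\mathfrak{P}}{\alpha}$ is the multiplicative order of $\alpha\bmod\mathfrak{P}$ in $\FF_p^{\times}$, and the hypothesis $\ord{\mathfrak{P}}{\alpha}\le(p-1)/f(X)$ becomes $d_{\mathfrak{P}}\ge f(X)$, where $d_{\mathfrak{P}}:=(p-1)/\ord{\mathfrak{P}}{\alpha}=[\FF_p^{\times}:\langle\alpha\bmod\mathfrak{P}\rangle]$ is the index of the subgroup generated by $\alpha$. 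So it suffices to show that the number of degree-one primes $\mathfrak{P}$ with $\mathcal{N}_{K/\QQ}(\mathfrak{P})\le X$ and $d_{\mathfrak{P}}\ge f(X)$ is $o(\li{X})$.

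\emph{The field-theoretic dictionary.} The bridge to field extensions is: for a squarefree integer $k$ coprime to the discarded primes, a surviving prime $\mathfrak{P}$ above $p$ satisfies $k\mid d_{\mathfrak{P}}$ if and only if $\mathfrak{P}$ splits completely in the Galois extension $K(\zeta_k,\alpha^{1/k})/K$ --- the condition $k\mid p-1$ places $\zeta_k$ in the residue field, and then $k\mid d_{\mathfrak{P}}$ says exactly that $\alpha$ is a $k$-th power modulo $\mathfrak{P}$. Since $K$ and $\alpha$ are fixed, $[K(\zeta_k,\alpha^{1/k}):K]$ equals its naive value $k\,\phi(k)$ up to a bounded factor --- a loss occurring only for the finitely many $k$ for which $K$ meets $\QQ(\zeta_k)$ nontrivially, or $\alpha$ is a proper power in $K(\zeta_k)$ --- so $[K(\zeta_k,\alpha^{1/k}):K]\gg_{K,\alpha}k\,\phi(k)$ for every $k$, whence $\sum_k 1/[K(\zeta_k,\alpha^{1/k}):K]$ converges with rapidly decaying tails. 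Assuming GRH for the Dedekind zeta functions of the fields $K(\zeta_k,\alpha^{1/k})$, the effective Chebotarev density theorem then yields, in exactly the quantitative shape it has over $\QQ$,
\[
\#\{\mathfrak{P}\ \text{degree one}:\ \mathcal{N}_{K/\QQ}(\mathfrak{P})\le X,\ k\mid d_{\mathfrak{P}}\}=\frac{\li{X}}{[K(\zeta_k,\alpha^{1/k}):K]}+O_{K,\alpha}\!\big(X^{1/2}\log(kX)\big).
\]

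\emph{Running Hooley's argument.} With these two inputs --- the effective Chebotarev estimate and the degree bound --- the rest is literally Hooley's argument in the Erd\H{o}s--Murty form, transcribed with $\QQ$ replaced by $K$. First one disposes of the primes of genuinely small order: if $\ord{\mathfrak{P}}{\alpha}=t$ then $\mathfrak{P}$ divides the nonzero ideal $(\beta^t-\gamma^t)\mathcal{O}_K$, which has norm $\ll C_{K,\alpha}^{\,t}$ and hence $O_{K,\alpha}(t)$ prime divisors, so $\#\{\mathfrak{P}:\mathcal{N}_{K/\QQ}(\mathfrak{P})\le X,\ \ord{\mathfrak{P}}{\alpha}\le X^{1/2}/\log X\}=O_{K,\alpha}\!\big(X/(\log X)^2\big)=o(\li{X})$; from now on $f(X)\le d_{\mathfrak{P}}\le X^{1/2}\log X$. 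Next, fixing an auxiliary threshold $y=y(X)\to\infty$ slowly, one separates the case that $d_{\mathfrak{P}}$ is $y$-smooth --- which forces $p-1$ to have an abnormally large $y$-smooth part, an event affecting only $o(\pi(X))$ rational primes, by a smooth-number estimate together with Brun--Titchmarsh over the relevant moduli --- from the case that $d_{\mathfrak{P}}$ has a prime factor $q>y$. In the latter case one follows Hooley's division of the range of $q$: over a medium range $y<q\le X^{1/2}/(\log X)^{A}$ one sums the effective Chebotarev estimate above, the main terms contributing $\ll\li{X}\sum_{q>y}q^{-2}=o(\li{X})$ and the error terms being absorbed by the medium threshold; over the complementary large range one argues elementarily, using that $\mathfrak{P}$ then divides $(\beta^{(p-1)/q}-\gamma^{(p-1)/q})\mathcal{O}_K$ with $(p-1)/q$ confined to a restricted range, exactly as Hooley treats the corresponding contribution over $\QQ$. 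Collecting all cases gives $o(\li{X})$.

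\emph{Where the difficulty lies.} The technical heart, precisely as in Hooley's original paper, is the calibration of the thresholds (the smoothness level $y$ and the exponent $A$) so that the three error contributions --- the summed GRH--Chebotarev error over the medium range, the elementary count over the large range (where the fields $K(\zeta_q,\alpha^{1/q})$ have degree comparable to $X$ and the Chebotarev error can no longer be summed to something $o(\pi(X))$), and the exceptional set of $p$ whose $y$-smooth part of $p-1$ is large --- are each $o(\pi(X))$. I expect this bookkeeping to be the main obstacle; but it is the same bookkeeping as in the rational case, because the only genuinely new feature here is that $K$ now sits inside the fields $K(\zeta_k,\alpha^{1/k})$ (and inside the residue fields $\FF_p$), which shifts the degrees and discriminants entering Hooley's estimates by bounded amounts, and only for finitely many $k$ --- which is the sense in which only minor cosmetic changes are required.
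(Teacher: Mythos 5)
Your overall strategy is the intended one: the paper does not reprove this proposition but cites it (Theorem 3.1 of the reference by J\"arviniemi) precisely on the grounds that Hooley's argument, in the Erd\H{o}s--Murty form, transfers to a fixed base field $K$ with only cosmetic changes, and your reduction to degree-one primes, the dictionary ``$k\mid d_{\mathfrak{P}}$ iff $\mathfrak{P}$ splits completely in $K(\zeta_k,\alpha^{1/k})$'', the degree lower bound $\gg_{K,\alpha}k\varphi(k)$ (which really needs only that $\alpha$ is not a root of unity, so that the exponents $h$ with $\alpha\in(K^\times)^h$ are bounded), and the GRH--Chebotarev input are exactly the right ingredients.

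There is, however, one concrete gap in your range decomposition for the prime obstructions $q$. After your first elementary step you know $f(X)\le d_{\mathfrak{P}}\le X^{1/2}\log X$, and you then split the remaining primes $q\mid d_{\mathfrak{P}}$ into a Chebotarev range $y<q\le X^{1/2}/(\log X)^A$ and a ``complementary large range'' that you propose to handle by the divisibility argument $\mathfrak{P}\mid(\beta^{(p-1)/q}-\gamma^{(p-1)/q})$. But for $q$ just above $X^{1/2}/(\log X)^A$ the exponent $r=(p-1)/q$ ranges up to $X^{1/2}(\log X)^A$, and the elementary count $\sum_{r\le X^{1/2}(\log X)^A}\BigO{r}=\BigO{X(\log X)^{2A}}$ is far too large; the divisibility argument only becomes effective once $q\ge X^{1/2}\log X$, i.e.\ once $r\le X^{1/2}/\log X$ --- and that regime is already subsumed by your first step, since $q\mid d_{\mathfrak{P}}$ with $q> X^{1/2}\log X$ forces $\ord{\mathfrak{P}}{\alpha}<X^{1/2}/\log X$. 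So the window $X^{1/2}/(\log X)^A<q\le X^{1/2}\log X$ is covered by neither of your two tools. The standard (and necessary) third tool is Brun--Titchmarsh applied to the bare congruence $p\equiv 1\bmod q$: this gives $\ll X/(\varphi(q)\log X)$ primes for each such $q$, and summing over the window yields $\BigO{X\log\log X/\log^2X}=o(\li{X})$ by Mertens. This is exactly the role of Lemma 2.5 in the paper's proof of the harder Theorem 1.4, and the same three-range structure (Chebotarev / Brun--Titchmarsh / elementary divisibility) is what Hooley's original argument uses; your enumeration of the error contributions omits the middle one. With that range reinstated the proof closes as you describe.
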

As a consequence, we also have
\begin{corollary}\label{cor:1.3}
With the notations and assumptions of \autoref{prop:1.2},
$$
\#\SET{p \leq X \;\middle|\; p\text{ splits completely in }K \text{ and }\ord{p}{\alpha} \leq \frac{p-1}{f(X)}}
= o(\li{x}).
$$
\end{corollary}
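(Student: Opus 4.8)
The plan is to derive \autoref{cor:1.3} from \autoref{prop:1.2} by a soft counting argument: I would show that every rational prime $p$ contributing to the left-hand side of the corollary produces (at least) one prime $\mathfrak{P}$ of $K$ that is counted by \autoref{prop:1.2}, and that this assignment $p\mapsto\mathfrak{P}$ is injective, so the corollary's count is bounded above by the proposition's, which is $o(\li{X})$.

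First I would recall the splitting behaviour. If $p$ splits completely in $K$, then $p\mathcal{O}_K=\mathfrak{P}_1\cdots\mathfrak{P}_n$ with $n=[K:\QQ]$ distinct primes, each of residue degree $1$; in particular $\mathcal{N}_{K/\QQ}(\mathfrak{P}_i)=p$ and $\mathcal{O}_K/\mathfrak{P}_i\cong\FF_p$. For all but the finitely many primes dividing the fractional ideal $(\alpha)$, the reduction $\alpha\bmod\mathfrak{P}_i$ lies in $\FF_p^\times$, so $\ord{\mathfrak{P}_i}{\alpha}$ is defined; here $\ord{p}{\alpha}$ in the statement is understood as this order for a suitable prime above $p$ (for $\alpha\in K\setminus\QQ$ the order may genuinely depend on the chosen prime, so one takes the largest possible set, i.e.\ requires only that \emph{some} $\mathfrak{P}\mid p$ has small order). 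Discarding the finitely many exceptional $p$ changes the count by $O(1)=o(\li{X})$.

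Now suppose $p\le X$ splits completely in $K$ and $\ord{p}{\alpha}\le\frac{p-1}{f(X)}$, and let $\mathfrak{P}=\mathfrak{P}(p)$ be a prime of $K$ above $p$ realizing this order. Then $\mathcal{N}_{K/\QQ}(\mathfrak{P})=p\le X$ and
$$\ord{\mathfrak{P}}{\alpha}=\ord{p}{\alpha}\le\frac{p-1}{f(X)}=\frac{\mathcal{N}_{K/\QQ}(\mathfrak{P})-1}{f(X)},$$
so $\mathfrak{P}$ is among the primes counted by \autoref{prop:1.2}. The map $p\mapsto\mathfrak{P}(p)$ is injective since $p$ is recovered from $\mathfrak{P}(p)$ as the rational prime below it. Since $\alpha$ is not a root of unity by the inherited hypotheses, \autoref{prop:1.2} applies, and we conclude that the left-hand side of \autoref{cor:1.3} is $\le o(\li{X})$, hence itself $o(\li{X})$.

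There is no real obstacle here; it is a routine reduction. The only points worth a word of care are (i) pinning down the meaning of $\ord{p}{\alpha}$ for $\alpha\in K\setminus\QQ$, as noted above — if instead one insists that \emph{all} primes above $p$ have small order, that set is contained in the one just treated, so the conclusion is unchanged; and (ii) observing that the thresholds match exactly, $\frac{p-1}{f(X)}=\frac{\mathcal{N}_{K/\QQ}(\mathfrak{P})-1}{f(X)}$, precisely because the relevant $\mathfrak{P}$ has residue degree $1$, so no loss is incurred in passing from rational primes to primes of $K$.
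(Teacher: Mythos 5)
Your reduction is correct and is exactly the (unwritten) argument the paper intends: each qualifying rational $p$ yields, injectively, a degree-one prime $\mathfrak{P}\mid p$ with $\mathcal{N}_{K/\QQ}(\mathfrak{P})=p\leq X$ and $\ord{\mathfrak{P}}{\alpha}\leq\frac{\mathcal{N}_{K/\QQ}(\mathfrak{P})-1}{f(X)}$, so the count is dominated by that of \autoref{prop:1.2}. Your worry in point (i) is moot because the paper's notation section defines $\ord{p}{\alpha}$ as the order in $\left(\mathcal{O}_K/p\mathcal{O}_K\right)^\times$, i.e.\ the least common multiple of the orders modulo the primes above $p$, so its smallness forces smallness modulo every $\mathfrak{P}\mid p$ and your containment holds a fortiori.
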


The second way \autoref{thm:1.1} can be generalized to the number field case is in terms of rational primes.
One wonders whether $\alpha$ has almost maximal possible order in $\left(\mathcal{O}_K/p\mathcal{O}_K\right)^\times$ for almost all rational primes $p\leq X$.
There is a positive proportion of rational primes $p$ which do not split completely in $K$.
Thus, one is led to consider cases where the factorization of $p$ in $K$ contains non-linear primes.

For these non-linear primes, Hooley's method doesn't work.
There are two main problems for such primes.
First, unlike the linear case, such a prime $\mathfrak{P}$ lying above $p\leq X$ will have norm $\mathcal{N}_{K/\QQ}(\mathfrak{P}) \geq p^2$ which can be significantly larger than $X$.
The second problem has to do with the error terms when applying Chebotarev’s density theorem.
Even when assuming GRH, the error terms become too large when applying Hooley's method for such primes.

The first works to deal with non-linear primes was due to Roskam in \cite{roskam2000quadratic}, \cite{roskam2002artin}.
In these works, Roskam showed that under GRH, if $K=\QQ(\alpha)$ is a quadratic field, and if we assume that $\alpha$ and its conjugate are multiplicatively independent, then $\alpha$ has maximal possible order in $\left(\mathcal{O}_K/p\mathcal{O}_K\right)^\times$ for a positive proportion of rational primes $p$, even when only considering non-split primes $p$. 

In the quadratic case, the problem of large norms can be dealt with by noting that for a prime $\mathfrak{P}$ of degree two, we have 
$$
\left| \left(\mathcal{O}_K/\mathfrak{P}\right)^\times\right|
=
\mathcal{N}_{K/\QQ}(\mathfrak{P})  - 1 = 
p^2 - 1 = (p-1)(p+1).
$$
The fact that the size of the multiplicative group of $\mathcal{O}_K/\mathfrak{P}$ factors into linear terms allows part of Hooley's method to be extended to such primes $\mathfrak{P}$.
As for the error terms from Chebotarev’s density theorem, Roskam was able to overcome this problem by applying the theorem to special sub-fields which still contained the required information, but gave sufficiently small error terms.

However, Roskam's method does not extend to field extensions $K/\QQ$ of degree larger than 2.
For such extensions, even if we restrict our attention to rational primes $p$ which factor into primes of degree one or two in $K$, the problem with the error terms from Chebotarev’s density theorem is still present.
In this paper, we consider special field extensions $K/\QQ$ with $[K:\QQ] > 2$ for which we are able to handle such rational primes.

\subsection{Main Results}
Let $K/\QQ$ be a Galois extension of $\QQ$, and let $\alpha$ be an element in $K$.
We will denote $G = \mathrm{Gal}(K/\QQ)$.
Let $C\subset G$ be some conjugacy class of elements of order 2.
Denote 
$$
\PP_C(X) = \SET{p \leq X \;\middle|\; \left(\frac{K/\QQ}{p}\right) = C}
$$
where $\left(\frac{K/\QQ}{\cdot}\right)$ denotes the Artin symbol.
\begin{theorem}\label{thm:1.4}
With the notations above, assume that there is a normal subgroup $N\triangleleft G$ and denote $\phi : G \rightarrow G/N$ the quotient homomorphism.
Let $C\subset G$ be a conjugacy class of elements of order 2 such that $C\not\subset N$ and $\phi(C)$ is contained in the center of $G/N$.
Assume also that for any $c\in C$,
\begin{equation}\label{eq-assump_n/cn}
\left.\prod_{n\in N}n(\alpha) \middle/ c\left(\prod_{n\in N}n(\alpha)\right)\right.
\end{equation}
is not a root of unity.
Then GRH implies the following:

If $\left|\mathcal{N}_{\QQ/K}(\alpha)\right| \neq 1$ then almost all primes $p\in \PP_C(X)$ are such that $\alpha$ is of almost maximal order in $\left(\mathcal{O}_{K} / p\mathcal{O}_{K}\right)^\times$.
In other words, if $f(x)$ is a function tending to infinity with $x$, then 
$$
\#\SET{p\in \PP_C(X) \;\middle| \;
\ord{p}{\alpha} \leq 
\frac{p^2 - 1}{f(X)}} = o(\li{X}).
$$

If $\left|\mathcal{N}_{\QQ/K}(\alpha)\right| = 1$ then for almost all primes $p\in \PP_C(X)$, $\alpha$ has order almost $p+1$ in $\left(\mathcal{O}_{K} / p\mathcal{O}_{K}\right)^\times$.
That is, if $f(x)$ is a function tending to infinity with $x$, then 
$$
\#\SET{p\in \PP_C(X) \;\middle| \;
\ord{p}{\alpha} \leq 
\frac{p+1}{f(X)}} = o(\li{X}).
$$

\end{theorem}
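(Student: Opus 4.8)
The plan is to push the order of $\alpha$ modulo $p$ down to the fixed field $K^N$ of $N$, and then to split the resulting order into a ``cyclotomic'' part, governed by a norm of $\alpha$ and handled by \autoref{prop:1.2}, and an ``anticyclotomic'' part, which is the new ingredient and the only place where the centrality of $\phi(C)$ is used. Discard the finitely many $p$ ramified in $K$ or at which $\alpha$ fails to be a unit, fix $p\in\PP_C(X)$, choose a prime $\mathfrak P$ of $K$ above $p$, and let $c\in C$ be its Frobenius, so $\mathfrak P$ has residue degree $2$. Since $C$ is a conjugacy class with $C\not\subset N$ we have $C\cap N=\emptyset$, so the Frobenius $\bar c:=\phi(c)$ of $p$ in $K^N/\QQ$ is a nontrivial involution; hence the prime $\mathfrak p:=\mathfrak P\cap K^N$ has residue degree $2$ over $\QQ$ and $\mathcal O_{K^N}/\mathfrak p=\mathcal O_K/\mathfrak P\cong\FF_{p^2}$. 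Put $\delta:=\prod_{n\in N}n(\alpha)=\mathcal N_{K/K^N}(\alpha)\in K^N$. Modulo $\mathfrak P$, $\delta$ is a product of the elements $n(\alpha)$, whose orders in the cyclic group $(\mathcal O_K/\mathfrak P)^\times$ equal the $\ord{n^{-1}\mathfrak P}{\alpha}$; a product in a cyclic group has order dividing the lcm of the orders of its factors, and the $n^{-1}\mathfrak P$ are among the primes of $K$ above $p$, so $\ord{\mathfrak p}{\delta}=\ord{\mathfrak P}{\delta}\mid\ord{p}{\alpha}$. Hence it suffices to bound $\ord{\mathfrak p}{\delta}$ from below.

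Since $\bar c$ is a central involution of $G/N$, the field $L:=(K^N)^{\bar c}$ is Galois over $\QQ$ with $[K^N:L]=2$, and every $p\in\PP_C(X)$ splits completely in $L$. In $\mathcal O_{K^N}/\mathfrak p\cong\FF_{p^2}$ one has $\delta^{p}\equiv\bar c(\delta)\pmod{\mathfrak p}$, hence $\delta^{p+1}\equiv\epsilon\pmod{\mathfrak p}$ for $\epsilon:=\mathcal N_{K^N/L}(\delta)=\mathcal N_{K/L}(\alpha)\in L$ (which reduces into $\FF_p^\times$), and $\delta^{p-1}\equiv\gamma^{-1}\pmod{\mathfrak p}$ for $\gamma:=\delta/\bar c(\delta)\in K^N$ (which reduces into $\mu_{p+1}$). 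Here $\gamma$ is precisely the element~\eqref{eq-assump_n/cn} — the same for every $c\in C$ since $\phi(C)=\{\bar c\}$ — and $\mathcal N_{K^N/L}(\gamma)=1$, i.e. $\bar c(\gamma)=\gamma^{-1}$. Writing $m=\ord{\mathfrak p}{\delta}$, $t=\ord{\mathfrak p\cap L}{\epsilon}$, $s=\ord{\mathfrak p}{\gamma}$, the congruences give $t=m/\gcd(m,p+1)$ and $s=m/\gcd(m,p-1)$, so $ts=m^2/\bigl(\gcd(m,p+1)\gcd(m,p-1)\bigr)\le m$ because $\gcd(m,p+1)\gcd(m,p-1)\ge\gcd(m,p^2-1)=m$; thus $\ord{\mathfrak p}{\delta}\ge\ord{\mathfrak p\cap L}{\epsilon}\cdot\ord{\mathfrak p}{\gamma}$, with $\ord{\mathfrak p\cap L}{\epsilon}\le p-1$ and $\ord{\mathfrak p}{\gamma}\le p+1$. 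If $|\mathcal N_{K/\QQ}(\alpha)|\neq1$, then $|\mathcal N_{L/\QQ}(\epsilon)|=|\mathcal N_{K/\QQ}(\alpha)|\neq1$, so $\epsilon$ is not a root of unity, and applying \autoref{prop:1.2} to $\epsilon$ over $L$ (the primes $\mathfrak p\cap L$, for $p\in\PP_C(X)$, making up a positive proportion of the primes of $L$ of norm at most $X$) yields $\ord{\mathfrak p\cap L}{\epsilon}>(p-1)/f_1(X)$ for all $p\in\PP_C(X)$ outside a set of size $o(\li{X})$, for any fixed $f_1\to\infty$. If $|\mathcal N_{K/\QQ}(\alpha)|=1$, this factor is simply dropped.

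The crux is the anticyclotomic estimate: for any fixed $f_2\to\infty$, $\ord{\mathfrak p}{\gamma}>(p+1)/f_2(X)$ for all $p\in\PP_C(X)$ outside a set of size $o(\li{X})$; note $\gamma$ is not a root of unity by~\eqref{eq-assump_n/cn}. If $\ord{\mathfrak p}{\gamma}\le(p+1)/f_2(X)$, then $j:=[\mu_{p+1}:\langle\gamma\bmod\mathfrak p\rangle]\ge f_2(X)$, and for $d\mid p+1$ the condition $d\mid j$ is equivalent to $\mathfrak p$ splitting completely in $K^N(\zeta_d,\gamma^{1/d})$. I would run the familiar three-range analysis on $j$. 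First, a ``small-to-medium'' divisor of $j$: either $j$ has a divisor $d$ with $\sqrt{f_2(X)}\le d\le\sqrt{f_2(X)}\log X$ all of whose prime factors are $\le\log X$, or $j$ has a prime factor in $(\log X,X^{\theta}]$ for a suitable small fixed $\theta>0$; in either case the relevant $p$ are counted by the GRH form of the Chebotarev density theorem in the Galois closure $F_d$ of $K^N(\zeta_d,\gamma^{1/d})/\QQ$. This is where the hypotheses enter: because $\bar c$ is central in $G/N$ and $\gamma$ has the shape~\eqref{eq-assump_n/cn} — so $\bar c(\gamma)=\gamma^{-1}$, and all $\mathrm{Gal}(K^N/\QQ)$-conjugates of $\gamma$ lie in inverse pairs $\{\eta,\eta^{-1}\}$ — one has $[F_d:\QQ]\ll d^{[L:\QQ]+1}$, a fixed power of $d$, which makes the GRH error terms summable over the relevant ranges of $d$. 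Second, a large prime factor $q>X^{\theta}$ of $j$: then $m:=(p+1)/q\ll X^{1-\theta}$ and $\gamma^{m}\equiv1\pmod{\mathfrak p}$, so $\mathfrak p$ divides the nonzero ideal $(\gamma^{m}-1)\mathcal O_{K^N}$; since $\log\bigl|\mathcal N_{K^N/\QQ}(\gamma^{m}-1)\bigr|\ll m$ there are $O(m/\log X)$ such $\mathfrak p$ of norm exceeding $X$, and taking $\theta>1/2$ and summing over $m$ gives $o(\li{X})$ (the $p\le\sqrt X$ being negligible). Third, the intermediate range of $q$, which I would handle exactly as the middle range is handled in Hooley's (equivalently Roskam's) treatment of Artin's conjecture, the Kummer extensions there too having degree only a fixed power of $q$.

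Combining the two estimates, when $|\mathcal N_{K/\QQ}(\alpha)|\neq1$ one gets $\ord{p}{\alpha}\ge\ord{\mathfrak p}{\delta}\ge\ord{\mathfrak p\cap L}{\epsilon}\cdot\ord{\mathfrak p}{\gamma}>(p^2-1)/\bigl(f_1(X)f_2(X)\bigr)$ off a set of size $o(\li{X})$; given $f\to\infty$, taking $f_1=f_2$ to be a sufficiently slowly growing function tending to infinity with $f_1f_2\le f$ gives the first assertion. When $|\mathcal N_{K/\QQ}(\alpha)|=1$ one uses only the anticyclotomic estimate: $\ord{p}{\alpha}\ge\ord{\mathfrak p}{\delta}\ge\ord{\mathfrak p}{\gamma}>(p+1)/f_2(X)$ off a set of size $o(\li{X})$, and choosing $f_2\le f$ gives the second. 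The main obstacle is the anticyclotomic estimate — and inside it, the intermediate range of $q$ together with the bookkeeping that keeps every Chebotarev error term of size $o(\li{X})$ — which is precisely the point at which Roskam's method was confined to quadratic $K$; it is the centrality of $\phi(C)$ in $G/N$, channelled through the special element~\eqref{eq-assump_n/cn}, that removes this restriction in higher degree.
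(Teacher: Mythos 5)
Your opening reduction is correct and is genuinely different from (and in some ways cleaner than) the paper's organization. The paper works with ``obstructions'' of type I and type II at each prime $q$ and never factors the order explicitly; you instead pass to $\delta=\mathcal N_{K/K^N}(\alpha)$, verify $\ord{\mathfrak p}{\delta}\mid\ord{p}{\alpha}$, and prove the pretty inequality $\ord{\mathfrak p}{\delta}\ge\ord{\mathfrak p\cap L}{\epsilon}\cdot\ord{\mathfrak p}{\gamma}$ via $\gcd(m,p+1)\gcd(m,p-1)\ge m$. That lets you dispatch the whole ``$p-1$ part'' by citing \autoref{prop:1.2} over $L$, where the paper instead reruns Hooley's argument for $\QQ(\zeta_q,\mathcal N_{K/\QQ}(\alpha)^{1/q})$; both are fine, and your version isolates the genuinely new statement (the anticyclotomic estimate for $\gamma$) more sharply.

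The gap is that the anticyclotomic estimate is precisely the theorem's main difficulty, and your treatment of its Chebotarev step does not close. You assert that the centrality of $\phi(C)$ is used to get the \emph{upper} bound $[F_d:\QQ]\ll d^{[L:\QQ]+1}$, ``which makes the GRH error terms summable.'' That upper bound is automatic (a Galois closure of $K^N(\zeta_d,\gamma^{1/d})$ never needs more than $[K^N:\QQ]$ Kummer generators) and it is not what the argument needs. What is needed, and what the hypotheses actually buy, is: (a) that $F_q$ is Galois over $\QQ$ --- this uses that the set of conjugates $\tau(\gamma)$ is permuted rationally, which follows from $\bar c$ being central; (b) that the Frobenius of every relevant rational prime $p$ in $\gal{F_q/\QQ}$ is a \emph{single, explicitly determined element} $\tilde\rho$ of order $2$ whose conjugacy class is a singleton --- this is the heart of the paper's Lemma~\ref{lem:2.4}, where one checks that $\rho|_{K^N}=\bar c$, $\rho(\zeta_q)=\zeta_q^{-1}$ and $\rho^2=1$ force $\rho(\tau(\gamma)^{1/q})=\bar c(\tau(\gamma))^{1/q}$ with no extra $\zeta_q^t$ twist; and (c) a \emph{lower} bound $[F_q:\QQ]\gg q(q-1)$, coming from $\gamma$ not being a root of unity. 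Only (b) and (c) together give a main term $\li{X}/[F_q:\QQ]\ll\li{X}/q^2$ that is summable over $q$; without pinning down the conjugacy class, the union of admissible classes could a priori be large and the density bound fails. Your proposal establishes none of (a)--(c), and deferring the remaining ranges to ``exactly as in Hooley's/Roskam's treatment'' begs the question, since the paper's whole point is that that treatment does not extend past quadratic $K$ for degree-two primes. (A smaller bookkeeping issue: your Chebotarev range stops at $X^\theta$ for a \emph{small} fixed $\theta$, but Brun--Titchmarsh over $X^\theta\le q\le\sqrt X\log X$ only yields $\sum 1/q\asymp 1$, not $o(1)$; you must run Chebotarev up to $\sqrt X/\log^2X$ as in the paper.)
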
 

\begin{remark}
In the case $\mathcal{N}_{\QQ/K}(\alpha) = 1$, having an order of $p+1$ is the largest possible order, since in this case the image of $\alpha$ in the quotient, and its powers, live inside a subgroup of exponent $p+1$ of the group $\left(\mathcal{O}_{K} / p\mathcal{O}_{K}\right)^\times$ (the subgroup of elements of norm 1).
Similarly, if $\mathcal{N}_{\QQ/K}(\alpha) = -1$ then $\ord{p}{\alpha}$ is at most $2(p+1)$ since $\alpha^2$ has norm $1$.
\end{remark}

As an example of \autoref{thm:1.4}, we consider the case where $G$ is abelian.
In this case, the conditions of \autoref{thm:1.4} regarding $G$ are satisfied if one chooses $N = \SET{1}$.
The conjugacy class $C$ must consist of a single element, and this element will trivially be in the center of $G/N\cong G$.
And so we get the following corollary.
\begin{corollary}\label{cor:1.5}
Let $K$ be a Galois extension of $\QQ$ with abelian Galois group and let $\alpha\in K$. 
Assume that for any $\sigma\in \mathrm{Gal}(K/\QQ)$ of order 2, $\frac{\alpha}{\sigma(\alpha)}$ is not a root of unity.
Denote by $\PP_{[2]}(X)$ the set of rational primes $p\leq X$ which factor into primes with inertia degree 2 in $K$.
Let $f(x)$ be some function tending to infinity with $x$.
Then, GRH implies the following:

{\setlength{\parindent}{0cm}
If $|\mathcal{N}_{K/\QQ}(\alpha)|\neq 1$ then
$$
\#\SET{p\in \PP_{[2]}(X) \;\middle| \; \ord{p}{\alpha}\leq \frac{p^2 - 1}{f(X)}} = o(\li{X}).
$$
If $\left|\mathcal{N}_{\QQ/K}(\alpha)\right| = 1$ then
$$
\#\SET{p\in \PP_{[2]}(X) \;\middle| \; \ord{p}{\alpha}\leq \frac{p+1}{f(X)}} = o(\li{X}).
$$
}
\end{corollary}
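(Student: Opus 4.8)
The plan is to obtain Corollary 1.5 as a direct consequence of Theorem 1.4, applied with the trivial normal subgroup $N=\{1\}$ and with the conjugacy class $C$ ranging over all the one-element classes $\{\sigma\}$, $\sigma\in G=\mathrm{Gal}(K/\QQ)$, for which $\sigma$ has order $2$.

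First I would verify the hypotheses of Theorem 1.4 for $N=\{1\}$ and $C=\{\sigma\}$ with $\sigma$ of order $2$. With $N$ trivial the quotient map $\phi:G\to G/N$ is an isomorphism onto $G$; since $G$ is abelian, $\phi(C)$ automatically lies in the center of $G/N$. As $\sigma$ has order $2$ it is nontrivial, so $C\not\subset N$. Finally $\prod_{n\in N}n(\alpha)=\alpha$, so the element appearing in \eqref{eq-assump_n/cn} is just $\alpha/\sigma(\alpha)$, which is not a root of unity by the standing hypothesis of the corollary. Thus Theorem 1.4 applies and gives, for each such $\sigma$: if $|\mathcal{N}_{K/\QQ}(\alpha)|\neq 1$ then $\#\{p\in\mathcal{P}_{\{\sigma\}}(X):\ord{p}{\alpha}\leq \frac{p^2 - 1}{f(X)}\}=o(\li{X})$, and if $|\mathcal{N}_{K/\QQ}(\alpha)|=1$ then $\#\{p\in\mathcal{P}_{\{\sigma\}}(X):\ord{p}{\alpha}\leq \frac{p+1}{f(X)}\}=o(\li{X})$.

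Next I would match $\mathcal{P}_{[2]}(X)$ with a finite union of these sets. The extension $K/\QQ$ is abelian, hence unramified outside a finite set of primes, and for an unramified prime $p$ the common inertia degree of the primes of $K$ above $p$ equals the order of the Artin symbol $\left(\frac{K/\QQ}{p}\right)\in G$. Consequently $p$ factors into primes of inertia degree $2$ precisely when this symbol has order $2$, so, excluding the finitely many ramified primes, $\mathcal{P}_{[2]}(X)$ is the disjoint union of the sets $\mathcal{P}_{\{\sigma\}}(X)$ over the finitely many $\sigma\in G$ of order exactly $2$. If there is no such $\sigma$, then $\mathcal{P}_{[2]}(X)$ is finite and the corollary holds trivially.

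Combining the two steps finishes the argument: the exceptional set in the corollary is contained in the union of the finitely many exceptional sets supplied by Theorem 1.4, together with the finitely many ramified primes, and a finite sum of $o(\li{X})$ quantities is still $o(\li{X})$; the two cases of the corollary correspond to the two cases of Theorem 1.4 applied uniformly in $\sigma$, since the condition $|\mathcal{N}_{K/\QQ}(\alpha)|=1$ does not involve $\sigma$. I do not anticipate any genuine obstacle here — the argument is essentially bookkeeping — the only mildly delicate points being the classical description of the splitting of a rational prime in an abelian extension via the order of its Artin symbol, and the elementary fact that a finite union of negligible sets is negligible.
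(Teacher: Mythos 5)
Your proposal is correct and follows exactly the paper's route: the paper also derives \autoref{cor:1.5} from \autoref{thm:1.4} by taking $N=\{1\}$ and the singleton conjugacy classes $C=\{\sigma\}$ for each $\sigma$ of order $2$ (centrality being automatic since $G$ is abelian), and then sums the finitely many $o(\li{X})$ contributions over the decomposition of $\PP_{[2]}(X)$. No discrepancies to report.
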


As a consequence, we also get the following.
\begin{corollary}\label{cor:1.6}
Let $K$ be a multiquadratic Galois extension of $\QQ$,
so that $\gal{K/\QQ}\cong \left(\ZZ/2\ZZ\right)^m$ for some $m\geq 1$, and let $\alpha\in K$.
Assume that $|\mathcal{N}_{K/\QQ}(\alpha)|\neq 1$ and that for any $\sigma\in\gal{K/\QQ}$ of order 2 (i.e. $\sigma\neq \mathrm{id}$), we have that $\frac{\alpha}{\sigma(\alpha)}$ is not a root of unity.
Then assuming GRH, for almost all primes $p\leq X$, $\ord{p}{\alpha}$ is of almost maximal order in $\left(\mathcal{O}_K/p\mathcal{O}_K\right)^\times$.
That is, if $f(x)$ is a function tending to infinity with $x$, then
$$
\#\SET{p\leq X \;\middle| \;\ord{p}{\alpha}\leq \frac{\exp{\left(\mathcal{O}_K/p\mathcal{O}_K\right)^\times}}{f(X)}} = o(\li{X}).
$$
\end{corollary}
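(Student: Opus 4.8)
The plan is to deduce \autoref{cor:1.6} from \autoref{cor:1.5} together with \autoref{cor:1.3}, by separating the rational primes $p \leq X$ according to how they factor in $K$. The key structural observation is that $\gal{K/\QQ} \cong (\ZZ/2\ZZ)^m$ has exponent $2$, so for every prime $p$ unramified in $K$ the Frobenius $\left(\frac{K/\QQ}{p}\right)$ is a single element of order $1$ or $2$; hence $p$ either splits completely in $K$ or factors into primes all of inertia degree exactly $2$. Only finitely many primes ramify in $K$, so after discarding them --- an $O(1)$, hence $o(\li{X})$, set --- the primes $p \leq X$ are partitioned into the set $S_1$ of completely split primes and the set $S_2 = \PP_{[2]}(X)$.

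Next I would record the exponent of $\left(\mathcal{O}_K/p\mathcal{O}_K\right)^\times$ in each regime. For $p \in S_1$ one has $\mathcal{O}_K/p\mathcal{O}_K \cong \FF_p^{2^m}$, so $\exp{\left(\mathcal{O}_K/p\mathcal{O}_K\right)^\times} = p-1$; for $p \in S_2$ the prime $p$ factors into $2^{m-1}$ primes with residue field $\FF_{p^2}$, so $\mathcal{O}_K/p\mathcal{O}_K \cong \FF_{p^2}^{2^{m-1}}$ and $\exp{\left(\mathcal{O}_K/p\mathcal{O}_K\right)^\times} = p^2-1$.

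I would then invoke the two input results. First note that $\alpha$ is not a root of unity: since $m \geq 1$ there is some $\sigma \in \gal{K/\QQ}$ of order $2$, and by hypothesis $\alpha/\sigma(\alpha)$ is not a root of unity, which would be impossible were $\alpha$ one (alternatively, $|\mathcal{N}_{K/\QQ}(\alpha)| \neq 1$ rules this out at once). Applying \autoref{cor:1.3} to the primes of $S_1$, all but $o(\li{X})$ of them satisfy $\ord{p}{\alpha} > \frac{p-1}{f(X)} = \frac{\exp{\left(\mathcal{O}_K/p\mathcal{O}_K\right)^\times}}{f(X)}$. On the other hand, the hypotheses $|\mathcal{N}_{K/\QQ}(\alpha)| \neq 1$ and ``$\alpha/\sigma(\alpha)$ is not a root of unity for every $\sigma \in \gal{K/\QQ}$ of order $2$'' are exactly those of \autoref{cor:1.5}, so all but $o(\li{X})$ of the primes of $S_2 = \PP_{[2]}(X)$ satisfy $\ord{p}{\alpha} > \frac{p^2-1}{f(X)} = \frac{\exp{\left(\mathcal{O}_K/p\mathcal{O}_K\right)^\times}}{f(X)}$.

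A union bound then completes the argument: the set of $p \leq X$ with $\ord{p}{\alpha} \leq \frac{\exp{\left(\mathcal{O}_K/p\mathcal{O}_K\right)^\times}}{f(X)}$ is contained in the union of the ramified primes, the exceptional primes of $S_1$, and the exceptional primes of $S_2$, each of which has size $o(\li{X})$. I do not expect a genuinely new difficulty here, since all the analytic content already resides in \autoref{prop:1.2}/\autoref{cor:1.3} and in \autoref{thm:1.4}/\autoref{cor:1.5}; the only points requiring care are verifying that these two regimes really do exhaust all rational primes --- which is precisely where the exponent-$2$ hypothesis on $\gal{K/\QQ}$ enters --- and checking that the normalization of $\ord{p}{\alpha}$ as the order of $\alpha$ in $\left(\mathcal{O}_K/p\mathcal{O}_K\right)^\times$ is consistent across the quoted statements.
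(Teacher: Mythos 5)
Your proposal is correct and follows essentially the same route as the paper: partition the rational primes into those splitting completely in $K$ (handled by \autoref{cor:1.3}) and those in $\PP_{[2]}(X)$ (handled by \autoref{cor:1.5}), then combine. The extra details you supply --- discarding ramified primes, computing the exponent in each regime, and checking $\alpha$ is not a root of unity --- are correct verifications that the paper leaves implicit.
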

\begin{proof}
In the multiquadratic field $K$ there are 2 ways in which a rational prime $p$ can split. 
Either $p$ splits completely in $K$, or it factors into primes with inertia degree 2.
For the first case we can use \autoref{cor:1.3}, and for the second case we use \autoref{cor:1.5}.
In either case, we find that there are only a negligible amount of rational primes $p\leq X$ for which 
$$
\ord{p}{\alpha}\leq \frac{\exp{\left(\mathcal{O}_K/p\mathcal{O}_K\right)^\times}}{f(X)}.
$$
\end{proof}

\begin{example}
Let $n_1,...,n_m\in\NN$, $n_i > 1$, be such that $n_1 n_2 ... n_m$ is square-free, and let $c_0,c_1,...,c_m\in\QQ\setminus\SET{0}$.
Consider the element $\alpha = c_0 + c_1\sqrt{n_1} + ... + c_m\sqrt{n_m}$ in $K = \QQ(\sqrt{n_1}, \sqrt{n_2} , ... , \sqrt{n_m})$.
For $\sigma\in\gal{K/\QQ}$, $\sigma\neq \mathrm{id}$, we have that $\frac{\alpha}{\sigma(\alpha)}\neq \pm 1$.
This follows from a theorem of Besicovitch \cite{besicovitch1940linear} which asserts that $\SET{1, \sqrt{n_1},...,\sqrt{n_m}}$ are linearly independent over $\QQ$.
If we further assume that $|\mathcal{N}_{K/\QQ}(\alpha)|\neq 1$, then from \autoref{cor:1.6} we get that assuming GRH, $\alpha$ has almost maximal order mod $p$ for almost all rational primes $p$.

So for example, let $\alpha = 1 + \sqrt{3} + \sqrt{5} \in \QQ(\sqrt{3},\sqrt{5})$.
Then $\mathcal{N}(\alpha) = -11$, and so if we assume GRH then $\alpha$ has almost maximal order in $\left(\mathcal{O}_K/p\mathcal{O}_K\right)^\times$ for almost all rational primes $p$.
\end{example}

\autoref{thm:1.4} can also be used to tackle cases where the Galois group is not abelian.
As an example, we consider the case where $G = D_m$, the dihedral group of order $2m$.
\begin{example}
\autoref{cor:1.5} also holds when $\mathrm{Gal}\left(K/\QQ\right) = D_m$.
\end{example}
\begin{proof}
We use the same notations as in \autoref{cor:1.5}.
We write 
$$
D_m = \BRAK{a,b \; \middle| \; a^m = 1,\, b^2 = 1,\, ab=ba^{-1}}.
$$
If $m$ is odd, then all elements of order 2 are of the form $ba^i$ for some $i$.
In this case we can apply \autoref{thm:1.4} with $N = \BRAK{a}$ and any conjugacy class $C \subset b\BRAK{a}$.

If $m=2k$ is even, then $a^k$ is also an element of order 2.
The element $a^k$ is in the center of $D_m$.
Thus, appealing to \autoref{thm:1.4} with the conjugacy class $C' = \SET{a^k}$ and the normal subgroup $N' = \SET{1}$ handles primes in $\PP_{C'}(X)$.

Overall, this shows that 
\begin{align*}
|\mathcal{N}_{K/\QQ}(\alpha)| \neq 1  & \text{ implies}&
\#\SET{p\in \PP_{[2]}(X) \;\middle| \; \ord{p}{\alpha} \leq \frac{p^2 - 1}{f(X)}} = o(\li{X})\\
\left|\mathcal{N}_{K/\QQ}(\alpha)\right| = 1   & \text{ implies} &
\#\SET{p\in \PP_{[2]}(X) \;\middle| \; \ord{p}{\alpha} \leq \frac{p+1}{f(X)}} = o(\li{X})
\end{align*}
as required.
\end{proof}

In \autoref{sec:3} we prove \autoref{cor:3.2} which is an analogue of \autoref{thm:1.4} for matrices.
For a rational matrix $A\in\mathrm{GL}_n(\QQ)$ we can consider the order of $A\bmod p$ in $\mathrm{GL}_n(\FF_p)$ for various rational primes $p$.
One can then ask if $A$ has almost maximal order for almost all elements in some subset of the rational primes. 
Even though the formulation of this variation does not mention algebraic numbers explicitly, it turns out that the results regarding Artin's primitive root conjecture for number fields can be used to tackle this question as well.

The case of $2\times2$ matrices in $\mathrm{SL}_2(\ZZ)$ was handled by Kurlberg in \cite[Theorem 2]{PärKurlberg2003} using ideas from Roskam's work \cite{roskam2002artin}. 
\autoref{cor:3.2} is an extension of this result to matrices of dimension larger than 2.

We also note that the matrix variation of Artin's primitive root conjecture has applications in problems arising in quantum chaos (see for example the works of Kurlberg and Rudnick \cite{kurlberg2000hecke} \cite{kurlberg2001quantum} and of Kelmer \cite{kelmer2010arithmetic} regarding quantum cat maps).

\subsection{Notations}
\begin{itemize}[leftmargin=*]
    \item For a number field $K$ we denote by $\mathcal{O}_K$ its ring of integers.
    \item For $\alpha = \alpha_0/\alpha_1$ ($\alpha_0,\alpha_1 \in \mathcal{O}_K$) co-prime to $p$ in $K$ we denote $\ord{p}{\alpha}$ the order of $\alpha$ in $\left(\mathcal{O}_{K} / p\mathcal{O}_K\right)^\times$.
    \item If $\mathfrak{P}$ is a prime in $\mathcal{O}_K$ above a prime $\mathfrak{p}$ in $\mathcal{O}_L$, we denote by $\left(\frac{K/L}{\mathfrak{P}}\right)\in \gal{K/L}$ its Artin symbol.
    We also denote $\left(\frac{K/L}{\mathfrak{p}}\right)$ the conjugacy class of $\gal{K/L}$ of all Artin symbols of primes above $\mathfrak{p}$.
    \item For a group $G$ we denote the exponent of $G$ by $\exp{G}$.
    \item Throughout, the letters $p,q$ will denote rational prime numbers.
    \item We denote by $\zeta_n$ a primitive $n\th$ root of unity.
    \item $M_n(R)$ denotes the set of all $n\times n$ matrices with entries from the ring $R$, and $\mathrm{GL}_n(R)\subset M_n(R)$ denotes the group of all invertible matrices.
    \item For a matrix $A\in M_n(\QQ)$ with $p\nmid \det A$, we denote by $\ord{p}{A}$ the order of $A\mod p$ in $ \mathrm{GL}_n(\FF_p)$.
    \item We use the standard Big O notation. 
    That is, $f(x) = \BigO{g(x)}$ if there are $C>0, N>0$ such that for all $x > N$: $|f(x)| \leq C g(x)$.
    \item We use the notation $f(x) \ll g(x)$ to indicate $f(x) = \BigO{g(x)}$.
\end{itemize}

\section{Preliminary results}

In this section we gather several results which we will
need in the proof of \autoref{thm:1.4}.
We begin by stating the effective version of Chebotarev's density theorem.
\begin{theorem}[GRH conditional Chebotarev's density theorem]\label{thm:2.1}
Let $L/K$ be a Galois extension with $\gal{L/K} = G$, and let $C\subset G$ be some conjugacy class.
Then assuming GRH:
\begin{multline*}
\#\SET{\mathfrak{P} \text{ prime in }K \;\middle|\; N(\mathfrak{P}) \leq X, \; \left(\frac{L/K}{\mathfrak{P}}\right) = C} = \\
\frac{|C|}{|G|}\li{X} + \BigO{|C|[K:\QQ]\sqrt{X}\log X + \frac{|C|}{|G|} \sqrt{X}\log |\Delta|}
\end{multline*}
where $\Delta$ is the discriminant of $L$ over $\QQ$.
\end{theorem}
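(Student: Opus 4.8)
The plan is to derive this from the analytic theory of $L$-functions in the manner of Lagarias--Odlyzko, with the sharpenings due to Serre. Write $n_K=[K:\QQ]$ and $n_L=[L:\QQ]$, and fix a representative $g\in C$. First I would pass from the prime count to the weighted sum
\[
\psi_C(X)\;=\;\sum_{\substack{\mathfrak{P}\ \text{unram. in}\ L,\ m\ge 1\\ N(\mathfrak{P})^m\le X}}\mathbf{1}_C(\sigma_{\mathfrak{P}}^m)\,\log N(\mathfrak{P}),
\]
where $\sigma_{\mathfrak{P}}\in G$ is a Frobenius element at $\mathfrak{P}$ (well defined since $\mathbf{1}_C$ is a class function). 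The prime powers with $m\ge 2$ and the correction coming from the finitely many ramified primes (of which there are $\ll\log|\Delta|$) are of lower order and get absorbed into the final error. Expanding $\mathbf{1}_C(\sigma)=\frac{|C|}{|G|}\sum_{\chi}\overline{\chi(g)}\,\chi(\sigma)$ over the irreducible characters $\chi$ of $G$, and recognising $\chi(\sigma_{\mathfrak{P}}^m)\log N(\mathfrak{P})$ as essentially the Dirichlet coefficients of $-\frac{L'}{L}(s,\chi,L/K)$, one obtains
\[
\psi_C(X)\;=\;\frac{|C|}{|G|}\sum_{\chi}\overline{\chi(g)}\,\psi(X,\chi),\qquad \psi(X,\chi)\;=\;-\frac{1}{2\pi i}\int_{(c)}\frac{L'}{L}(s,\chi,L/K)\,\frac{X^{s}}{s}\,ds+(\text{lower-order terms}).
\]

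The main obstacle is that for a non-abelian $\chi$ the Artin $L$-function $L(s,\chi,L/K)$ is not known to be holomorphic (Artin's conjecture), so neither GRH nor the contour-shift argument can be applied to it directly. I would circumvent this exactly as Deuring and Lagarias--Odlyzko do: since $\mathbf{1}_C$ is supported on the cyclic subgroup $H=\langle g\rangle$, it may instead be expanded in the characters $\psi$ of $H$ induced up to $G$, and the invariance of Artin $L$-functions under induction gives $L(s,\psi^{G},L/K)=L(s,\psi,L/L^{H})$. For $\psi\neq 1$ the right-hand side is a Hecke $L$-function over the field $L^{H}$, hence entire by Hecke's theorem and subject to GRH by hypothesis, while $\psi=1$ yields the Dedekind zeta function of $L^{H}$, whose only pole (at $s=1$) supplies the main term. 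Since $[L^{H}:\QQ]\le n_L$ and $\log|\Delta_{L^{H}}|\ll\log|\Delta|$, this replacement leaves the bookkeeping below unaffected.

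With holomorphy in hand I would apply the explicit formula to each relevant $\psi(X,\chi)$ (equivalently, shift the Perron integral to the left of the critical line). Under GRH every nontrivial zero $\rho$ satisfies $\mathrm{Re}\,\rho=\tfrac12$, whence $\sum_{\rho}X^{\rho}/\rho\ll\sqrt{X}\,(\log X)\,\log\mathfrak{q}(\chi)$ with $\mathfrak{q}(\chi)$ the analytic conductor, and the main term $X$ arises only from the trivial character, contributing $\frac{|C|}{|G|}X$ to $\psi_C(X)$. Recombining the characters and using $|\chi(g)|\le\dim\chi$, $\sum_{\chi}(\dim\chi)^2=|G|$, and the conductor--discriminant formula $\sum_{\chi}(\dim\chi)\log N(\mathfrak{f}_{\chi})\le\log|\Delta_{L/K}|\ll\log|\Delta|$ to bound $\sum_{\chi}|\chi(g)|\log\mathfrak{q}(\chi)\ll n_L\log X+\log|\Delta|$, one arrives (using $n_L=|G|n_K$) at
\[
\psi_C(X)\;=\;\frac{|C|}{|G|}X+\BigO{\,|C|\,n_K\,\sqrt{X}\log X+\tfrac{|C|}{|G|}\sqrt{X}\log|\Delta|\,}.
\]
Optimising the precise power of $\log X$ above is the delicate but routine part of the estimate, and is where Serre's refinements enter. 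A concluding partial summation converts $\psi_C$ into the prime count, replacing $X$ by $\li{X}$ and leaving an error of the same order, which is the assertion of the theorem.
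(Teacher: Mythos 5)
The paper offers no proof of this statement beyond the citation \cite[Theorem 1]{lagarias1977effective}, and your sketch is a faithful outline of exactly that Lagarias--Odlyzko argument: the passage to the weighted sum $\psi_C$, Deuring's reduction to the cyclic subgroup $H=\langle g\rangle$ so that only Hecke $L$-functions over $L^{H}$ (to which GRH applies unconditionally of Artin's conjecture) are needed, the explicit formula with the conductor--discriminant bound, and a final partial summation. The one phrase to tighten is ``$\mathbf{1}_C$ is supported on $H$'' --- it is supported on the conjugacy class $C$; the correct statement is that $\mathbf{1}_C$ is a constant multiple of the induction to $G$ of the class function on $H$ concentrated at $g$, which is what licenses the expansion in characters of $H$.
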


\begin{proof}
\cite[Theorem 1]{lagarias1977effective}.
\end{proof}

The error term in the theorem above involves the discriminant of $L$ over $\QQ$.
In our case, the field $L$ will have the form $L = K\left(\zeta_q, \alpha_1^{1/q}, \alpha_2^{1/q},...,\alpha_m^{1/q}\right)$ for some elements $\alpha_1,\alpha_2,...,\alpha_m\in K$.
And so, we state a result which gives a bound on the discriminant of such fields.
Specifically, our next lemma will be used to show that the term $\frac{|C|}{|G|} \sqrt{X}\log |\Delta|$ in \autoref{thm:2.1} is not going to be too large.
More specifically, we will show that in the cases we consider, we have
\begin{equation}\label{eq:2.1}
\frac{|C|}{|G|} \sqrt{X}\log |\Delta| \ll \sqrt{X}\log X
\end{equation}
with implied constants depending on $C,K,\alpha_1,...,\alpha_m$ but not on $q$.

\begin{lemma}\label{lem:2.2}
Let $K$ be a Galois number field over $\QQ$.
Let $\alpha_1,\alpha_2,...,\alpha_m\in K$, and consider 
$$
L = K\left(\zeta_q, \alpha_1^{1/q},\alpha_2^{1/q},...,\alpha_m^{1/q}\right).
$$
Denote by $\Delta$ the discriminant of $L$ over $\QQ$.
Then we have that
$$
\log |\Delta| \ll [L:\QQ]\log q
$$
where the implied constants depend on $K,\alpha_1,...,\alpha_m$.
\end{lemma}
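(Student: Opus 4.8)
The plan is to bound the discriminant of $L/\QQ$ by controlling the primes that can ramify in $L$ together with their exponents in $\Delta$. The key observation is that $L$ is a compositum of $K$, the cyclotomic field $\QQ(\zeta_q)$, and the radical extensions $K(\alpha_i^{1/q})$, so a rational prime $\ell$ ramifies in $L$ only if it ramifies in $K$, or $\ell = q$, or $\ell$ divides the numerator or denominator of the norm $\mathcal{N}_{K/\QQ}(\alpha_i)$ (equivalently, $\ell$ divides some fixed integer built from the $\alpha_i$). The first and third sets are finite and depend only on $K$ and on $\alpha_1,\dots,\alpha_m$, while the only ``new'' prime as $q$ varies is $q$ itself.

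First I would write $\log|\Delta| = \sum_{\ell} v_\ell(\Delta)\log\ell$, where the sum is over rational primes $\ell$ ramifying in $L$, and invoke the standard bound $v_\ell(\Delta) \le [L:\QQ]\bigl(1 + v_\ell([L:\QQ])\bigr) - 1 \le [L:\QQ]\log_\ell([L:\QQ]) + [L:\QQ]$ (this is the classical estimate on the exponent of a prime in the discriminant of a degree-$[L:\QQ]$ extension, coming from the tame/wild ramification bound on the different; any convenient form suffices). Summing over the finitely many ``bad'' primes coming from $K$ and from the $\alpha_i$ contributes $O([L:\QQ]\log[L:\QQ])$ with an implied constant depending only on $K,\alpha_1,\dots,\alpha_m$, and since $[L:\QQ] \le [K:\QQ]\cdot q \cdot q^m$ we have $\log[L:\QQ] \ll \log q$, which is absorbed. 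For the prime $\ell = q$ we get a contribution $\le [L:\QQ]\bigl(\log_q[L:\QQ] + 1\bigr)\log q \ll [L:\QQ]\log q$, again since $\log[L:\QQ] \ll \log q$.

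Combining these, $\log|\Delta| \ll [L:\QQ]\log q$ with an implied constant depending on $K$ and the $\alpha_i$, as claimed. I expect the main technical point to be justifying that no prime outside the finite ``bad'' set (depending only on $K$ and the $\alpha_i$) and $\{q\}$ ramifies in $L$: this requires checking that for $\ell \nmid q\,\Delta_K$ and $\ell$ coprime to the relevant norms, the extension $K(\alpha_i^{1/q})/K$ is unramified at primes above $\ell$ — which follows because $x^q - \alpha_i$ is separable modulo such primes and Kummer theory applies once $\zeta_q$ is adjoined. Everything else is a routine application of the discriminant-of-a-compositum inequality $\Delta_{L} \mid \Delta_{L_1}^{[L:L_1]}\Delta_{L_2}^{[L:L_2]}$ and the elementary prime-exponent bound, so I would not belabor those computations.
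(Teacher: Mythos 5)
Your proof is correct, but it takes a genuinely different route from the paper's. The paper proceeds by induction on $m$, repeatedly applying the tower formula $\Delta_{K/F}=\mathcal{N}_{L/F}(\Delta_{K/L})\Delta_{L/F}^{[K:L]}$ and bounding each relative discriminant by the explicit discriminant of the defining polynomial ($\prod_{i<j}(\zeta_q^i-\zeta_q^j)^2$ for the cyclotomic step and $(-1)^{(q-1)/2}\alpha_{t+1}^{q-1}q^{q-2}$ for each radical step); everything is elementary and yields explicit constants. You instead identify the finite set of ramified primes (divisors of $\Delta_K$, the prime $q$, and primes coming from the $\alpha_i$) and control the exponent of each via the classical wild-ramification bound on the different, $v_\ell(\Delta)\le [L:\QQ](1+v_\ell([L:\QQ]))$, combined with $\log[L:\QQ]\ll\log q$. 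Your approach is more conceptual and transfers immediately to extensions where explicit polynomial discriminants are less accessible, at the cost of invoking the local different bound and a Kummer-theoretic ramification criterion. Two small points to make precise if you write it up: since the $\alpha_i\in K$ need not be algebraic integers and the norm can exhibit cancellation between primes above the same rational prime, the correct ``bad'' set is the set of rational primes lying below a $K$-prime at which some $\alpha_i$ has nonzero valuation (your parenthetical ``some fixed integer built from the $\alpha_i$'' covers this, e.g.\ the product of the norms of numerator and denominator); and the unramifiedness criterion for $K(\zeta_q,\alpha_i^{1/q})/K(\zeta_q)$ at $\mathfrak{l}\nmid q$ is $v_{\mathfrak{l}}(\alpha_i)\equiv 0\pmod q$, which holds in particular when $v_{\mathfrak{l}}(\alpha_i)=0$. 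Neither is a gap.
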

\begin{remark}
The bound \eqref{eq:2.1} then follows from \autoref{lem:2.2} for $q\leq X$, since in this case $\log q \leq \log X$.
\end{remark}
\begin{proof}
This lemma appears in various forms in the
literature, e.g. \cite[Theorem 23]{MR3994150}.
We present here a proof for completeness.

We prove this by induction on $m$.
Throughout the proof, if $K/F$ is a field extension, we denote by $\Delta_{K/F}$ the relative discriminant of $K$ over $F$.
We use the known relationship for the relative discriminants in a tower fields $K/L/F$:
\begin{equation}\label{eq:disc_tower}
\Delta_{K/F} = \mathcal{N}_{L/F}\left(\Delta_{K/L}\right)
\Delta_{L/F}^{[K:L]}.
\end{equation}
We begin with the case $m=0$.
In this case we look at $K(\zeta_q) / K / \QQ$.
We have that 
$$
[K(\zeta_q):K] = \bigslant{[K(\zeta_q):\QQ]}{[K:\QQ]} \gg q
$$
with implied constants depending on $K$.
We also have:
$$
\left|
\mathcal{N}_{K/\QQ}\left(\Delta_{K(\zeta_q)/K}\right)
\right|
\leq
\left|
\mathcal{N}_{K/\QQ}\left(
\prod_{0\leq i < j < q-1}(\zeta_q^i-\zeta_q^j)^2 \right)
\right|
=
\left(q^{q-2}\right)^{[K:\QQ]}.
$$
From \eqref{eq:disc_tower} we then get 
$$
|\Delta_{K(\zeta_q)/\QQ}| \leq q^{(q-2)[K:\QQ]}\left|
\Delta_{K/\QQ}\right|^{q-1}.
$$
And so we get
$$
\log |\Delta_{K(\zeta_q)/\QQ}| \ll [K(\zeta_q):\QQ]\log q
$$
with implied constants that depend on $K$.

Denote now
$$
K_{t} = K\left(\zeta_q,\alpha_1^{1/q},...,\alpha_t^{1/q}\right).
$$
We assume $|\Delta_{K_t/\QQ}|\ll [K_t:\QQ]\log q$ and we wish to show $|\Delta_{K_{t+1}/\QQ}|\ll [K_{t+1}:\QQ]\log q$.
If $\alpha_{t+1}^{1/q}\in K_t$ then $K_{t+1} = K_t$ and the claim is trivially satisfied.
Assume then that $\alpha_{t+1}^{1/q}\not \in K_t$.
Consider the tower of extensions $K_{t+1}/K_t/\QQ$.
We calculate
$$
\Delta_{K_{t+1}/K_t} = 
\prod_{0\leq i < j < q} \left(\alpha_{t+1}^{1/q}\zeta_q^i - \alpha_{t+1}^{1/q}\zeta_q^j\right)^2
= (-1)^{\frac{q-1}{2}}\alpha_{t+1}^{q-1}q^{q-2}.
$$
This gives
$$
\left|\mathcal{N}_{K_{t}/\QQ}(\Delta_{K_{t+1}/K_t})\right|
=
\left| \mathcal{N}_{K/\QQ}(\alpha_{t+1})\right|^{(q-1)[K_{t}:K]}
q^{(q-2)[K_{t}:\QQ]}.
$$
We now apply \eqref{eq:disc_tower}.
For $\Delta_{K_t/\QQ}$ we use the induction hypothesis.
This gives us
\begin{multline*}
\log |\Delta_{K_{t+1}/\QQ}| \ll 
\log \left|\mathcal{N}_{K/\QQ}(\alpha_{t+1})\right|(q-1)[K_{t}:K]
+\\
(q-2)[K_{t}:\QQ] \log q 
+ 
[K_{t+1}:K_{t}][K_{t}:\QQ]\log q.
\end{multline*}
Noting that $[K_{t+1}:\QQ] = q[K_{t}:\QQ]$ we then get
$$
\log |\Delta_{K_{t+1}/\QQ}| \ll [K_{t+1}:\QQ]\log q
$$
with implied constants depending on $K,\alpha_1,...,\alpha_{t+1}$ but not on $q$.

\end{proof}

Lastly, we will also require the Brun–Titchmarsh theorem.
\begin{theorem}[Brun–Titchmarsh]\label{thm:2.3}
For $d<X$, denote by $\pi(X,d,a)$ the number of primes $p\leq X$ with $p\equiv a \bmod{d}$.
Then $$\pi(X,d,a) < \frac{2X}{\varphi(d)\log(X/d)}.$$
\end{theorem}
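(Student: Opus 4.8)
The plan is to obtain the bound by sieve theory, following the classical route to the Brun--Titchmarsh inequality. First I would set up the sieve: fix the residue class $a \bmod d$, put $\mathcal{A} = \{\, n \le X : n \equiv a \pmod{d}\,\}$, so that $|\mathcal{A}| = X/d + O(1)$, and note that every prime $p \le X$ with $p \equiv a \pmod{d}$ and $p > z$ is free of prime factors $\ell \le z$ with $\ell \nmid d$. Hence, with $P(z) = \prod_{\ell \le z,\ \ell \nmid d}\ell$,
$$
\pi(X,d,a) \;\le\; z + S(\mathcal{A},z), \qquad S(\mathcal{A},z) := \#\{\, n \in \mathcal{A} : (n,P(z)) = 1\,\}.
$$
Since $(\ell,d) = 1$, the class $a \bmod d$ meets the multiples of $\ell$ with density $1/\ell$, so this is a one-dimensional sieve problem.

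Next I would apply Selberg's upper bound sieve to $S(\mathcal{A},z)$, which gives
$$
S(\mathcal{A},z) \;\le\; \frac{X/d}{G_d(z)} + R, \qquad G_d(z) = \sum_{\substack{m \le z \\ (m,d)=1,\ \mu^2(m)=1}}\frac{1}{\varphi(m)},
$$
where $R$ is a sum over $m_1,m_2 \le z$ of the $O(1)$ errors for the progressions to moduli $[m_1,m_2]\,d$, so $R \ll z^2$. The arithmetic input is the lower bound $G_d(z) \ge \tfrac{\varphi(d)}{d}\log z$. I would deduce it from the elementary identity $\sum_{m \le z}\mu^2(m)/\varphi(m) = \sum_{\operatorname{rad}(n) \le z}1/n \ge \sum_{n \le z}1/n \ge \log z$ together with the factorisation of a squarefree $n$ into its part supported on $d$ and its part coprime to $d$, which gives $\sum_{m\le z}\mu^2(m)/\varphi(m) \le \big(\sum_{b \mid \operatorname{rad}(d)}1/\varphi(b)\big)\,G_d(z) = \tfrac{d}{\varphi(d)}\,G_d(z)$.

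Combining the two displays, $\pi(X,d,a) \le z + \frac{X}{\varphi(d)\log z} + O(z^2)$. Choosing $z = \sqrt{X/d}\,/\log X$ makes the outer two terms $o\!\left(X/(\varphi(d)\log(X/d))\right)$ and gives $\log z = \tfrac12\log(X/d) - \log\log X$, so
$$
\pi(X,d,a) \;\le\; (2 + o(1))\,\frac{X}{\varphi(d)\log(X/d)}
$$
uniformly for $d \le X^{1-\delta}$, for any fixed $\delta > 0$; this already produces the shape of the asserted inequality.

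The main obstacle is to remove the $o(1)$ and, more seriously, to cover the entire range $d < X$: once $d$ is close to $X$ the Selberg remainder $z^2 \approx X/d$ overwhelms the main term and the argument above breaks down. The clean, fully uniform statement with constant exactly $2$ is the theorem of Montgomery and Vaughan, and I would obtain it by replacing Selberg's sieve with the large sieve inequality
$$
\sum_{q \le Q}\ \sum_{\substack{a \bmod q \\ (a,q)=1}}\Big|\sum_{n \le X} c_n\, e^{2\pi i a n/q}\Big|^2 \;\le\; (X + Q^2)\sum_n |c_n|^2,
$$
used through its dual form, with the delicate weighting of Montgomery and Vaughan, so as to forbid the primes in $(z,X]$ from concentrating in the single class $a \bmod d$ beyond the claimed amount. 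In the write-up I would present the Selberg-sieve computation as the transparent backbone and cite Montgomery--Vaughan for the precise statement.
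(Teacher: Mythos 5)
Your write-up is sound, but note that the paper does not prove this statement at all: it simply cites Montgomery--Vaughan (Theorem 2 of \emph{The large sieve}), which is exactly where you end up for the sharp constant $2$ in the full range $d<X$. Your Selberg-sieve computation (the lower bound $G_d(z)\ge \frac{\varphi(d)}{d}\log z$ via $\sum_{m\le z}\mu^2(m)/\varphi(m)=\sum_{\mathrm{rad}(n)\le z}1/n\ge\log z$ and the factorisation over divisors of $\mathrm{rad}(d)$) is correct and transparently yields $(2+o(1))\,X/(\varphi(d)\log(X/d))$ for $d\le X^{1-\delta}$, and you rightly flag that the remainder $O(z^2)$ kills the argument when $d$ is near $X$, so the fully uniform constant-$2$ statement must be taken from Montgomery--Vaughan. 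For the purposes of this paper the $(2+o(1))$ version in the range $d\le X^{1-\delta}$ would in fact suffice everywhere the theorem is invoked (Lemmas on obstructions use moduli $q\le\sqrt{X}\log X$ and $m<f(X)g(X)<X^{\theta}$), so your elementary backbone plus the citation is a perfectly adequate, indeed more informative, substitute for the paper's bare reference.
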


This is \cite[Theorem 2]{montgomery1973large}.

\section{Proof of Main Results for Number Fields}\label{sec:2}
In this section we prove \autoref{thm:1.4}.
Throughout this section we use the notation of \autoref{thm:1.4}.
We wish to show that if $|\mathcal{N}_{K/\QQ}(\alpha)|\neq 1$ then the number of primes $p\in\PP_C(X)$ for which $\ord{p}{\alpha}$ is smaller than $\frac{p^2 - 1}{f(X)}$ is $o(\li{X})$, and that if $\left|\mathcal{N}_{\QQ/K}(\alpha)\right| = 1$ then the number of primes $p\in\PP_C(X)$ for which $\ord{p}{\alpha}$ is smaller than $\frac{p + 1}{f(X)}$ is $o(\li{X})$.

For $p\in\PP_C(X)$, we have that 
$$
\exp{\left(\mathcal{O}_K/p\mathcal{O}_K\right)^\times} = p^2 - 1.
$$
The only reason $\ord{p}{\alpha}$ might be smaller than $p^2 - 1$ is if there is some prime $q$ such that $q\mid p^2 - 1$ and $\alpha$ is a $q\th$ power mod $p$.
If such a prime $q$ exists, we say that $p$ has an obstruction at $q$.
Since $p^2 - 1 = (p-1)(p+1)$, if $p$ has an obstruction at $q$ then either $q\mid p-1$ or $q\mid p+1$.
If $q\mid p-1$ we call this an obstruction of type I, and if $q\mid p+1$ we call this an obstruction of type II.
And so, before we proceed to the proof of \autoref{thm:1.4}, we first prove a lemma bounding the number of primes $p\in \PP_C(X)$ having obstructions at some sufficiently large prime $q$ with $q < \frac{\sqrt{X}}{\log^2 X}$.
The main novelty of the paper is in providing sufficiently strong bounds for the number of such $p$'s with a type II obstruction. 
The rest of the proof broadly follows Hooley's original ideas.
We will adapt some of Hooley's arguments in order to get a bound for the number of primes $p\in  \PP_C(X)$ having obstructions at some $q$ with $q >  \frac{\sqrt{X}}{\log^2 X}$.
We will then show that obstructions at very small primes also eliminate only a negligible amount of primes $p\in\PP_C(X)$. 

We will denote by $B_\alpha(X;q)$ the number of primes $p\in \PP_C(X)$ having an obstruction at $q$.
We will also denote $B_\alpha^I(X;q), B_\alpha^{II}(X;q)$ the number of primes $p\in \PP_C(X)$ having an obstruction at $q$ of type I, II respectively.
We begin by proving the following lemma.
\begin{lemma}\label{lem:2.4}
Let $g(x)$ be a function tending to infinity with $x$.
Assuming GRH, we have:
$$
\sum_{g(X) \leq q \leq \frac{\sqrt{X}}{\log^2 X}}
B^{II}_{\alpha}(X;q)
= o\left(\li{X}\right).
$$
If we further assume $|\mathcal{N}_{K/\QQ}(\alpha)|\neq 1$, then we have:
$$
\sum_{g(X) \leq q \leq \frac{\sqrt{X}}{\log^2 X}}
B_{\alpha}(X;q)
= o\left(\li{X}\right).
$$
\end{lemma}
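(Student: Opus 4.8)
The plan is, for each prime $q$ in the range $g(X)\le q\le \sqrt X/\log^2 X$ (so in particular $q$ is odd once $X$ is large), to bound $B^{II}_\alpha(X;q)$ — and, for the second assertion, also $B^{I}_\alpha(X;q)$ — by the number of primes of a suitable number field $E$ whose Frobenius in a suitable Galois extension $L/E$ lies in a prescribed conjugation-stable set $S_q\subseteq\gal{L/E}$, and then to sum over $q$. In every case the construction will give $[E:\QQ]=O_K(1)$, $[L:\QQ]\asymp_K q^2$, and, crucially, $|S_q|=O_K(1)$. Granting these, \autoref{lem:2.2} yields $\log|\Delta_L|\ll_K q^2\log q$, so \autoref{thm:2.1} gives
\[
\#\{\,\mathfrak q : N(\mathfrak q)\le X,\ \left(\tfrac{L/E}{\mathfrak q}\right)\in S_q\,\}=\frac{|S_q|}{|\gal{L/E}|}\,\li{X}+O_K\bigl(\sqrt X\,\log X\bigr),
\]
the main term being $\ll_K q^{-2}\li{X}$ uniformly in $q\le X$. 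Summing the main terms over $q\ge g(X)$ gives $o(\li{X})$ because $\sum_{q\ge g(X)}q^{-2}\to0$, and summing the error terms over $q\le\sqrt X/\log^2 X$ gives $\ll_K\sqrt X\log X\cdot\#\{q\le\sqrt X/\log^2 X\}\ll_K X/\log^2 X=o(\li{X})$. (Brun--Titchmarsh is not needed in this range; \autoref{thm:2.3} enters only for the complementary range $q>\sqrt X/\log^2 X$, where the degrees involved exceed $X$.)

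For the type~II bound, put $F=K^N$ and let $F_0\subseteq F$ be the subfield fixed by $\phi(c)$. Since $\phi(C)=\{\phi(c)\}$ is central of order $2$ in $G/N$ (nontrivial because $C\not\subset N$), the group $N'=\phi^{-1}(\langle\phi(c)\rangle)$ is normal in $G$, so $K/F_0$ and $F_0/\QQ$ are Galois, $[F:F_0]=2$, and $[F_0:\QQ]=|G/N|/2=O_K(1)$. Set
\[
\beta=\prod_{n\in N}n(\alpha)=\mathcal N_{K/F}(\alpha)\in F,\qquad \delta=\phi(c)(\beta)/\beta\in F.
\]
Because $N\triangleleft G$ one has $\mathcal N_{K/F}(c(\alpha))=\prod_{n\in N}(cn)(\alpha)=c(\beta)=\phi(c)(\beta)$, so $\phi(c)(\delta)=\delta^{-1}$; and by hypothesis \eqref{eq-assump_n/cn}, $\delta$ is not a root of unity, hence for all but finitely many $q$ we have $[K(\zeta_q,\delta^{1/q}):K(\zeta_q)]=q$ and $\gal{K(\zeta_q,\delta^{1/q})/K(\zeta_q)}\cong\ZZ/q$. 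Now suppose $p\in\PP_C(X)$ has a type~II obstruction at $q$: then $q\mid p+1$ and $\alpha$ is a $q$-th power modulo every prime of $K$ above $p$. Applying $\mathcal N_{K/F}$ to $\alpha$ and to $c(\alpha)$ shows that $\beta$, $\phi(c)(\beta)$, and therefore $\delta$, are $q$-th powers modulo every prime $\mathfrak p$ of $F$ above $p$ — i.e.\ each such $\mathfrak p$ splits completely in $F(\zeta_q,\delta^{1/q})/F$. Moreover the Frobenius of $p$ in $G$ lies in $C\subseteq N'$, which maps to $1$ in $\gal{F_0/\QQ}$, so $p$ splits completely in $F_0$ and every prime $\mathfrak q$ of $F_0$ above $p$ is inert in $F/F_0$. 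Consider $L_q=K(\zeta_q,\delta^{1/q})$: because the only $F_0$-conjugates of $\delta$ are $\delta$ and $\delta^{-1}$, the extension $L_q/F_0$ is Galois, of degree $|N'|\,(q-1)\,q\asymp_K q^2$ for all but finitely many $q$. Translating the properties above, for every prime $\mathfrak q$ of $F_0$ above $p$ the Frobenius $\left(\tfrac{L_q/F_0}{\mathfrak q}\right)$ lies in
\[
S_q=\{\,h\in\gal{L_q/F_0} : h^2=1,\ h|_K\in C,\ h|_{\QQ(\zeta_q)}=(\zeta_q\mapsto\zeta_q^{-1})\,\}
\]
(the condition $h^2=1$ together with $h|_K\in C$ already forces the $F(\zeta_q,\delta^{1/q})$-component of $h$ to record that $\delta$ is a $q$-th power mod $\mathfrak p$). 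Since $S_q$ is conjugation-stable, $B^{II}_\alpha(X;q)$ is at most the number of primes $\mathfrak q$ of $F_0$ of norm $\le X$ with Frobenius in $S_q$, and \autoref{thm:2.1} applies with base field $F_0$.

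The key quantitative point — and the step I expect to be the real obstacle — is that $|S_q|=O_K(1)$. An $h\in S_q$ restricts to an involution $h_0\in\gal{K(\zeta_q)/F_0}$ with $h_0|_K\in C$ and $h_0|_{\QQ(\zeta_q)}=(\zeta_q\mapsto\zeta_q^{-1})$, and there are at most $|C|$ such $h_0$ (exactly $|C|$ once $K\cap\QQ(\zeta_q)=\QQ$, which holds for all but finitely many $q$). Over each such $h_0$ there is exactly one involution of $\gal{L_q/F_0}$: writing $\gal{L_q/K(\zeta_q)}\cong\ZZ/q$ with generator $\tau:\delta^{1/q}\mapsto\zeta_q\delta^{1/q}$, a direct Kummer-theory computation shows that any lift $\tilde h_0$ conjugates $\tau$ to $\tau^{\chi(h_0)\psi(h_0)}$, where $\chi(h_0)\in(\ZZ/q)^\times$ is defined by $h_0(\zeta_q)=\zeta_q^{\chi(h_0)}$ (so $\chi(h_0)=-1$) and $\psi(h_0)\in\{\pm1\}$ by $h_0(\delta)=\delta^{\psi(h_0)}$ (so $\psi(h_0)=-1$, since $h_0|_F=\phi(c)$); hence $h_0$ acts trivially on $\ZZ/q$, so $(\tilde h_0\tau^s)^2=\tilde h_0^{\,2}\,\tau^{2s}$ (note $\tilde h_0^{\,2}\in\gal{L_q/K(\zeta_q)}$), which equals $1$ for exactly one $s\bmod q$ (here we use $q$ odd). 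This is precisely where the centrality of $\phi(C)$ and the relation $\phi(c)(\delta)=\delta^{-1}$ (equivalently, that \eqref{eq-assump_n/cn} is not a root of unity) enter, and it is what produces the extra factor $q^{-1}$ in the density — on top of the $q^{-1}$ coming from $q\mid p+1$ — that makes the sum over $q$ converge. With $|S_q|=O_K(1)$ and $|\gal{L_q/F_0}|\asymp_K q^2$, the bookkeeping of the first paragraph gives the first assertion.

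For the second assertion one must also bound type~I obstructions. If $p\in\PP_C(X)$ has a type~I obstruction at $q$ then $q\mid p-1$, and applying $\mathcal N_{K/\QQ}$ to $\alpha$ shows that $a:=\mathcal N_{K/\QQ}(\alpha)\in\QQ^\times$ is a $q$-th power modulo $p$; since $|a|\ne1$, for all but finitely many $q$ the number $a$ is not a $q$-th power in $\QQ(\zeta_q)$, so $B^{I}_\alpha(X;q)$ is at most the number of $p\le X$ splitting completely in $\QQ(\zeta_q,a^{1/q})$, a Galois extension of $\QQ$ of degree $q(q-1)$ with $\log|\Delta|\ll_a q^2\log q$. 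Once more \autoref{thm:2.1} gives $B^{I}_\alpha(X;q)\ll_a q^{-2}\li{X}+\sqrt X\log X$, and combining this with the type~II bound yields $\sum_{g(X)\le q\le\sqrt X/\log^2 X}B_\alpha(X;q)=o(\li{X})$. The delicate part throughout is the type~II construction: one needs an auxiliary extension that is simultaneously Galois over a base field of degree $O_K(1)$, of degree $O_K(q^2)$, and able to detect ``$\alpha$ is a $q$-th power mod $p$'' at the degree-$2$ primes; the descent to $\delta=\phi(c)(\beta)/\beta$ over $F_0$ accomplishes this, and the involution count above is what keeps $|S_q|$ bounded uniformly in $q$.
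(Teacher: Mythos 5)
Your proof is correct and follows essentially the same strategy as the paper's: type I obstructions are handled identically via splitting in $\QQ\left(\zeta_q,\mathcal{N}_{K/\QQ}(\alpha)^{1/q}\right)$, and type II obstructions via a Kummer extension of degree $\asymp q^2$ built from the quotient of $\mathcal{N}_{K/K^N}(\alpha)$ by its $\phi(c)$-conjugate, with the same involution-counting computation (using $q$ odd, $h_0(\zeta_q)=\zeta_q^{-1}$ and $h_0(\delta)=\delta^{-1}$) showing the target Frobenius set is bounded, followed by effective Chebotarev, \autoref{lem:2.2}, and summation over primes $q$. The only (harmless) difference is that you work over the base field $F_0$ with the single radical $\delta^{1/q}$ and a conjugation-stable set of size $O(1)$, whereas the paper works over $\QQ$, adjoins radicals of all $\tau$-twists of $\alpha_M/\sigma(\alpha_M)$, and pins down a single Frobenius element $\Tilde{\rho}$; both yield the density $\ll q^{-2}$ that makes the sum over $q\geq g(X)$ equal to $o(\li{X})$.
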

\begin{proof}
Let $p\in\PP_C(X)$ be such that $p$ has an obstruction at $q$.
We assume that $p,q$ are both large enough so that none divides the discriminant of $K$.
Let $\mathfrak{P}_1 \mathfrak{P}_2 \cdot ...\cdot \mathfrak{P}_m$ be the prime factorization of $p\mathcal{O}_K$ in $K$ (where $m = |G|/2$).
The fact that $p$ has an obstruction at $q$ implies that $q\mid p^2-1$, and that there exists some $\beta \in\mathcal{O}_K$ such that 
\begin{equation}\label{eq:2.2}
\alpha = \beta^q \bmod{\mathfrak{P}_i},\quad i=1,...,m.
\end{equation}

We begin by first considering the case where $|\mathcal{N}_{K/\QQ}(\alpha)|\neq 1$ and $p$ has an obstruction of type I at $q$.
If $\tau\in G$ takes $\mathfrak{P}_i$ to $\mathfrak{P}_1$, then by applying $\tau$ to equation $i$ in \eqref{eq:2.2} we get
\begin{equation*}
\tau(\alpha) = \tau(\beta)^q \bmod{\mathfrak{P}_1},\quad \tau\in G.
\end{equation*}
By multiplying these equations, we get
\begin{equation*}
\mathcal{N}_{K/\QQ}(\alpha) = \mathcal{N}_{K/\QQ}(\beta)^q \bmod{\mathfrak{P}_1}.
\end{equation*}
Since both $\mathcal{N}_{K/\QQ}(\alpha)$ and $\mathcal{N}_{K/\QQ}(\beta)$ are rational, it follows that the last equation holds mod $p\ZZ = \mathfrak{P}_1 \cap \QQ$. 
That is,
\begin{equation}\label{eq-nalpha_nbeta}
\mathcal{N}_{K/\QQ}(\alpha) = \mathcal{N}_{K/\QQ}(\beta)^q \bmod{p}.
\end{equation}

Consider the field
$$
L_q^I = \QQ\left(\zeta_q, \mathcal{N}_{K/\QQ}(\alpha)^{1/q}\right).
$$
We show that for $p\nmid \mathcal{N}_{K/\QQ}(\alpha)$, $p$ splits completely in $L_q^I$.
Indeed, $p$ has an obstruction of type I at $q$, so that $p\equiv 1 \bmod q$.
This implies that $p$ splits completely in $\QQ(\zeta_q)$.
Consider now how $p$ factors in $\QQ\left(\mathcal{N}_{K/\QQ}(\alpha)^{1/q}\right)$.
The primes appearing in the discriminant of $\QQ\left(\mathcal{N}_{K/\QQ}(\alpha)^{1/q}\right)$ divide either $q$ or $\mathcal{N}_{K/\QQ}(\alpha)$.
The condition \eqref{eq-nalpha_nbeta} together with $p\equiv 1\bmod q$ imply that the equation $T^q \equiv \mathcal{N}_{K/\QQ}(\alpha) \bmod p$ has $q$ solutions.
It follows by a criteria of Dedekind that $p$ splits in $\QQ\left(\mathcal{N}_{K/\QQ}(\alpha)^{1/q}\right)$ (since we assumed $p\nmid (\mathcal{N}_{K/\QQ}(\alpha)$, and we know that $p\nmid q$).
Thus, $p$ splits in the compositum $L_q^I$.

By assumption, we have that $|\mathcal{N}_{K/\QQ}(\alpha)|\neq 1$.
Thus, for large enough $q$ (such that $\mathcal{N}_{K/\QQ}(\alpha)^{1/q} \not \in \QQ$), we have $[L_q^I:\QQ] = q(q-1)$.

And so, from \autoref{thm:2.1} and \autoref{lem:2.2} we get that, assuming GRH,
$$
B_\alpha^I(X;q) \leq \frac{\li{X}}{q(q-1)} + \BigO{\sqrt{X}\log X}.
$$
Summing this for all $g(X) \leq q \leq \frac{\sqrt{X}}{\log^2 X}$ we get
$$
\sum_{g(X) \leq q \leq \frac{\sqrt{X}}{\log^2 X}}
B^I_{\alpha}(X;q) \leq 
\frac{\li{X}}{g(X)} + \BigO{\frac{X}{\log^2 X}} = o(\li{X}).
$$

We now consider the case where $p$ has an obstruction of type II at $q$.
Denote by $M$ the sub-field of $K$ fixed by the normal subgroup $N$.
Thus, $M$ is a Galois extension of $\QQ$.
Since $\left(\frac{K/\QQ}{p}\right) = C$ and since $C\not\subset N$, the prime factorization of $p\mathcal{O}_M$ in $M$ has the form $\mathfrak{p}_1 \mathfrak{p}_2 \cdot ... \cdot \mathfrak{p}_{m'}$, where the $\mathfrak{p}_i$ have inertia degree 2 over $p$ (and $m' = |G|/2|N|$).
\begin{center}
\begin{tikzpicture}
    \node (Q1) at (-1,0) {$\mathbb{Q}$};
    \node (Q2) at (-1,1) {$M$};
    \node (Q3) at (-1,2) {$K$};
    
    \node (Q4) at (1,0) {$(p)$};
    \node (Q5) at (1,1) {$\mathfrak{p}_1\mathfrak{p}_2\cdot ...\cdot \mathfrak{p}_{m'}$};
    \node (Q6) at (1,2) {$\mathfrak{P}_1\mathfrak{P}_2\cdot ...\cdot \mathfrak{P}_m$};

    \draw (Q1)--(Q2);
    \draw (Q2)--(Q3);
    \draw (Q4)--(Q5);
    \draw (Q5)--(Q6);
\end{tikzpicture}
\end{center}
From equations \eqref{eq:2.2}, employing a similar strategy as we did before, we get that
\begin{equation}\label{eq:2.3}
\mathcal{N}_{K/M}(\alpha) = \left(\mathcal{N}_{K/M}(\beta)\right)^q \bmod{\mathfrak{p}_i},\quad i=1,...,m'.
\end{equation}
To ease notation, denote $\alpha_M = \mathcal{N}_{K/M}(\alpha)$.
Since $C$ is a conjugacy class in $G$, and the image of $C$ in $G/N$ (which we denoted $\phi(C)$) is in the center of $G/N$, it follows that $\phi(C)$ corresponds to a single element $\sigma \in \gal{M/\QQ} \cong G/N$.
It can also be seen that $\left(\frac{M/\QQ}{\mathfrak{p}_i}\right) = \sigma$ for all $i=1,2,...,m'$.
We now consider the field
$$
L_q^{II} = M\left(\zeta_q\right)\left(
 \SET{\left(\frac{\tau(\alpha_M)}
 {\sigma(\tau(\alpha_M))}\right)^{1/q}}_{\tau \in \gal{M/\QQ} / \BRAK{\sigma}}
\right).
$$
The notation $\tau \in \gal{M/\QQ} / \BRAK{\sigma}$ means that from each pair of elements $\tau,\sigma \tau \in \gal{M/\QQ}$, we only pick one representative.
We first show that $L_q^{II}$ is Galois over $\QQ$.
For this, consider the polynomial
$$
F(T) = \prod_{\tau \in \gal{M/\QQ}}
\left(T^q - \frac{\tau(\alpha_M)}{\sigma(\tau(\alpha_M))}\right).
$$
Using the fact that $\sigma$ is in the center of $\gal{M/\QQ}$, we see that this polynomial is invariant under $\gal{M/\QQ}$.
Thus, we have that $F(T)\in\QQ[T]$.
The fact that $L_q^{II}$ is Galois over $\QQ$ then follows since $L_q^{II}$ is the splitting field of $F(T)$.

We now show that there is a singe element $\Tilde{\rho}\in\gal{L_q^{II}/\QQ}$ which is the Artin symbol of all $p\in\PP_C(X)$ having a type II obstruction at $q$.
To see this, let $\rho$ be the Artin symbol $\left(\frac{L_q^{II}/\QQ}{\mathfrak{P}}\right)$ of some prime $\mathfrak{P}$ in $L_q^{II}$ above such a $p$.
We already saw that $\left(\frac{M/\QQ}{p}\right) = \sigma$.
Since the Artin symbol $\left(\frac{M/\QQ}{p}\right)$ is also given by $\rho|_M$, we must have 

\begin{equation}\label{eq:2.33}
\rho|_M = \sigma.
\end{equation}
Furthermore, since $q\mid p+1$, we must have that $\rho|_{\QQ(\zeta_q)}$ is given by the involution
\begin{equation}\label{eq:2.44}
\rho|_{\QQ(\zeta_q)} : \; \zeta_q \mapsto \zeta_q^{-1}.
\end{equation}
Lastly, we claim that $\rho$ must be of order 2.
To see this, we show that $p$ factors into primes of inertia degree 2 in $L_q^{II}$.
Consider first how $p$ factors in $M$.
In this case we know that $p\mathcal{O}_M = \mathfrak{p}_1 \mathfrak{p}_2 \cdot ... \cdot \mathfrak{p}_{m'}$, where each $\mathfrak{p}_i$ is of inertia degree 2 over $p$.
We now look how each of these primes factors further in $L_q^{II}$.
Let $\mathfrak{p}$ be one of the primes in the factorization above.
We claim that this prime splits completely in $L_q^{II}$.
Since $q\mid \mathcal{N}(\mathfrak{p}) = p^2 -1$, $\mathfrak{p}$ splits completely in $M(\zeta_q)$.
And so, in order for this prime to split completely in $L_q^{II}$ it is enough to show that 
\begin{equation}\label{eq:tpower1}
T^q = \frac{\tau(\alpha_M)}{\sigma(\tau(\alpha_M))} \bmod{\mathfrak{p}}
\end{equation} 
has a solution in $M$ for all $\tau\in\gal{M/\QQ}/\BRAK{\sigma}$.
Applying $\tau^{-1}$ to \eqref{eq:tpower1} and noting that $\sigma$ is in the center of $\gal{M/\QQ}$, we get that \eqref{eq:tpower1} is equivalent to the equation
\begin{equation}\label{eq:tpower2}
T^q = \frac{\alpha_M}{\sigma(\alpha_M)} \bmod{\tau^{-1}(\mathfrak{p})}.
\end{equation} 
Since $\sigma$ is the Frobenius element mod $\tau^{-1}(\mathfrak{p})$, we see that \eqref{eq:tpower2} is equivalent to
\begin{equation}\label{eq:tpower3}
T^q = \alpha_M^{p-1} \bmod{\tau^{-1}(\mathfrak{p})}.
\end{equation} 
Taking \eqref{eq:2.3} to the power of $p-1$, we see that \eqref{eq:tpower3} indeed has a solution (we can take $T = \left(\mathcal{N}_{K/M}(\beta)\right)^{p-1}$).
It follows that $p$ splits into primes of inertia degree 2 in $L_q^{II}$, which proves that $\rho$ has order 2.

We now prove that there is only a single element $\Tilde{\rho}\in\gal{L_q^{II}/\QQ}$ of order 2 which satisfies \eqref{eq:2.33} and \eqref{eq:2.44}.
Indeed, let $\Tilde{\rho}$ be as above.
Every element in $\gal{L_q^{II}/\QQ}$ is determined by its image of $\zeta_q$ and of the elements $(\tau(\alpha_M)/\sigma(\tau(\alpha_M)))^{1/q}$ with $\tau \in \gal{M/\QQ}/\BRAK{\sigma}$.
For $\Tilde{\rho}$ we have from \eqref{eq:2.44} that $\Tilde{\rho}(\zeta_q) = \zeta_q^{-1}$.
Furthermore, from \eqref{eq:2.33} we must have that $\Tilde{\rho}$ sends $(\tau(\alpha_M)/\sigma(\tau(\alpha_M)))^{1/q}$ to $(\sigma(\tau(\alpha_M))/\tau(\alpha_M))^{1/q}\zeta_q^t$ for some $t\in\FF_q$.
However, the fact that $\Tilde{\rho}$ has order 2 (and that $q$ is odd) implies $t=0$, since 
\begin{multline*}
\Tilde{\rho}\circ\Tilde{\rho} \left(\left(\frac{\tau(\alpha_M)}{\sigma(\tau(\alpha_M))}\right)^{1/q}\right)
=
\Tilde{\rho}\left(\left(\frac{\sigma(\tau(\alpha_M))}{\tau(\alpha_M)}\right)^{1/q}\right)
\Tilde{\rho}\left(\zeta_q^t\right)
=\\
\Tilde{\rho}\left(\left(\frac{\tau(\alpha_M)}{\sigma(\tau(\alpha_M))}\right)^{1/q}\right)^{-1}
\zeta_q^{-t}
=
\left(\frac{\tau(\alpha_M)}{\sigma(\tau(\alpha_M))}\right)^{1/q}
\zeta_q^{-2t}.
\end{multline*}

To summarize, we found out that if $p\in\PP_C(X)$ and $p$ has an obstruction of type II at $q$, then $\left(\frac{L_q^{II}/\QQ}{p}\right) = \Tilde{\rho}$.
And so, appliying \autoref{thm:2.1} and \autoref{lem:2.2} with the conjugacy class $\SET{\Tilde{\rho}}\subset \gal{L_q^{II}/\QQ}$ we get that
\begin{equation}\label{eq:2.8}
B_\alpha^{II}(X;q) \leq
\frac{\li{X}}{[L_q^{II}:\QQ]} + \BigO{\sqrt{X}\log X}.
\end{equation}

From our assumption \eqref{eq-assump_n/cn}, we know that $\alpha_M/\sigma(\alpha_M)$ is not a root of unity.
As a result we get that $[L_q^{II}:\QQ] \gg (q-1)q$.
And so, summing \eqref{eq:2.8} for $g(X)\leq q\leq \sqrt{X}/\log^2 X$ we get
$$
\sum_{g(X)\leq q \leq \frac{\sqrt{X}}{\log^2 X}}
B_\alpha^{II}(X;q)
\ll
\frac{\li{X}}{g(X)} + \frac{X}{\log^2 X}
= o(\li{X}).
$$
\end{proof}

\autoref{lem:2.4} takes care of obstructions at primes $g(X)\leq q \leq \frac{\sqrt{X}}{\log^2 X}$.
We now give two more lemmas, taking care of obstructions in the ranges
$\frac{\sqrt{X}}{\log^2 X}\leq q \leq \sqrt{X}\log X$ and $\sqrt{X}\log X \leq q \leq X$.

\begin{lemma}\label{lem:2.5}
$$
\sum_{\frac{\sqrt{X}}{\log^2 X}\leq q \leq \sqrt{X}\log X}
B_\alpha(X;q)
= \BigO{\frac{X\log \log X}{\log^2 X}}
$$
\end{lemma}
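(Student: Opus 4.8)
The plan is to throw away the $q$-th power residue information entirely and bound $B_\alpha(X;q)$ using only the divisibility $q\mid p^2-1$; once $q$ is of size roughly $\sqrt X$, this congruence datum alone is already strong enough, and the Brun--Titchmarsh theorem (\autoref{thm:2.3}) does the rest.

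First I would record the crude inequality. If $p\in\PP_C(X)$ has an obstruction at $q$, then in particular $q\mid p^2-1=(p-1)(p+1)$, so $p\equiv 1\bmod q$ or $p\equiv -1\bmod q$. Dropping the constraint that $p$ lie in $\PP_C(X)$, this gives
$$
B_\alpha(X;q)\leq \pi(X,q,1)+\pi(X,q,q-1).
$$
Since $q\leq\sqrt X\log X<X$ for $X$ large, \autoref{thm:2.3} applies and yields $\pi(X,q,\pm1)<\tfrac{2X}{\varphi(q)\log(X/q)}$. For $q$ in the stated range we have $\varphi(q)=q-1\gg q$, and $X/q\geq \sqrt X/\log X$, so $\log(X/q)\geq \tfrac12\log X-\log\log X\gg \log X$. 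Hence, uniformly for $\tfrac{\sqrt X}{\log^2 X}\leq q\leq\sqrt X\log X$,
$$
B_\alpha(X;q)\ll \frac{X}{q\log X}.
$$

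It then remains to sum this over $q$ in the range, which reduces the lemma to the elementary estimate
$$
\sum_{\frac{\sqrt X}{\log^2 X}\leq q\leq \sqrt X\log X}\frac1q = \BigO{\frac{\log\log X}{\log X}}.
$$
This follows from Mertens' theorem $\sum_{q\leq Y}1/q=\log\log Y+M+o(1)$: writing $A=\sqrt X/\log^2 X$ and $B=\sqrt X\log X$, both $\log A$ and $\log B$ equal $\tfrac12\log X+O(\log\log X)$, so $\log\log B-\log\log A=O(\log\log X/\log X)$. Combining the two displays gives
$$
\sum_{\frac{\sqrt X}{\log^2 X}\leq q\leq \sqrt X\log X} B_\alpha(X;q)\ll \frac{X}{\log X}\cdot\frac{\log\log X}{\log X}=\BigO{\frac{X\log\log X}{\log^2 X}},
$$
which is the claim.

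\textbf{Main obstacle.} There is essentially no hard step here; the only thing that needs a little care is the bookkeeping over the narrow window of $q$'s, where one logarithmic saving comes from $\log(X/q)\asymp\log X$ inside Brun--Titchmarsh and a second from $\sum 1/q$ over the window, the two together with an unavoidable $\log\log X$ loss producing exactly the stated bound. It is worth stressing the contrast with \autoref{lem:2.4}: there the $q$-th power condition (and Chebotarev) were essential because $q$ was too small for Brun--Titchmarsh to bite, whereas here, for $q\gg \sqrt X/\log^2 X$, congruence information alone suffices.
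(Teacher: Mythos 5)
Your proof is correct and follows essentially the same route as the paper: discard the $q$-th power condition, keep only $q\mid p^2-1$, apply Brun--Titchmarsh to $p\equiv\pm1\bmod q$, and sum $1/q$ over the window via Mertens. The only difference is that you spell out the Mertens computation that the paper leaves implicit.
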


\begin{proof}
The proof of this case is identical to Hooley's original arguments. 
Let $p\in\PP_C(X)$ be a prime with an obstruction at $q$.
Since the range of $q$ is fairly small, we can ignore part of the information.
We will only use the fact that an obstruction at $q$ implied that $q\mid p^2 - 1$.
In other words, $p\equiv \pm 1\bmod{q}$.
Using \nameref{thm:2.3}, we get that for $\frac{\sqrt{X}}{\log^2 X}\leq q \leq \sqrt{X}\log X$, the number of such $p$'s is bounded by 
$$
B_\alpha(X;q) < \frac{4X}{(q-1)\log\left(X/q\right)}\ll \frac{\li{X}}{q}.
$$
Summing this inequality for all $\frac{\sqrt{X}}{\log^2 X}\leq q \leq \sqrt{X}\log X$ gives the result.
\end{proof}

\begin{lemma}\label{lem:2.6}
We have the bound:
$$
\sum_{\sqrt{X}\log X \leq q \leq X}
B^{II}_\alpha(X;q)
= \BigO{\frac{X}{\log^2 X}}.
$$
If we further assume $|\mathcal{N}_{K/\QQ}(\alpha)|\neq 1$
then we also have:
$$
\sum_{\sqrt{X}\log X \leq q \leq X}
B_\alpha(X;q)
= \BigO{\frac{X}{\log^2 X}}
$$
\end{lemma}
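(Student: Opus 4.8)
The plan is to mimic Hooley's treatment of large primes: in this range \autoref{thm:2.1} is useless, since its error term $\sqrt{X}\log X$ already dwarfs $\li{X}$, so instead I would use that an obstruction at such a large $q$ forces a \emph{small} exponent to annihilate a fixed algebraic number, whence $p$ must divide a fixed integer, or ideal, of controlled size. Two reductions come first. Throughout I would silently discard the $\BigO{1}$ primes dividing the discriminant of $K$ or the numerators and denominators of the fixed algebraic numbers that appear. Since every $q$ in the range is $>2$, an obstruction at $q$ is of type I or of type II, so it is enough to bound $\sum_q B^{II}_\alpha(X;q)$ in general and $\sum_q B^{I}_\alpha(X;q)$ when $|\mathcal{N}_{K/\QQ}(\alpha)|\neq 1$. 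And if some $p\le X$ had two distinct obstructions $q_1\neq q_2$ of the \emph{same} type in $[\sqrt{X}\log X,X]$, then $q_1q_2\ge X\log^2 X$ would divide $p\mp 1\le X+1$, which is impossible for large $X$; hence each of these sums counts each offending $p$ at most once, i.e.\ it is bounded by the number of $p\in\PP_C(X)$ admitting \emph{some} obstruction of the relevant type at a prime $q\ge\sqrt{X}\log X$.

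For the type I sum I would recall from the proof of \autoref{lem:2.4} that a type I obstruction at $q$ gives \eqref{eq-nalpha_nbeta}, so, writing $m:=(p-1)/q$, the rational number $\mathcal{N}_{K/\QQ}(\alpha)$ is a $q$-th power in $\FF_p^\times$ and hence has order dividing $m$; equivalently $p$ divides the numerator of $\mathcal{N}_{K/\QQ}(\alpha)^m-1$. Writing $\mathcal{N}_{K/\QQ}(\alpha)=c/d$ in lowest terms, the hypothesis $|\mathcal{N}_{K/\QQ}(\alpha)|\neq 1$ forces $|c|\neq|d|$, so $c^m-d^m$ is a nonzero integer with $|c^m-d^m|\ll H^m$ (where $H:=\max(|c|,|d|)\ge 2$), hence with $\BigO{m}$ distinct prime factors. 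Since $q\ge\sqrt{X}\log X$ and $p\le X$ give $m\le\sqrt{X}/\log X$, summing yields $\sum_{m\le\sqrt{X}/\log X}\BigO{m}=\BigO{X/\log^2 X}$.

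For the type II sum — the main point — I would put $\alpha_M:=\mathcal{N}_{K/M}(\alpha)$ and $\theta:=\alpha_M/\sigma(\alpha_M)\in M^\times$, which by \eqref{eq-assump_n/cn} (as observed in the proof of \autoref{lem:2.4}) is not a root of unity. Let $\mathfrak{p}$ be a prime of $M$ above such a $p$; it has residue degree $2$ over $p$, with Frobenius $\sigma$, so $\sigma(\alpha_M)\equiv\alpha_M^{p}\pmod{\mathfrak{p}}$, whence $\theta\equiv\alpha_M^{1-p}\pmod{\mathfrak{p}}$ and $\theta^{p+1}\equiv\alpha_M^{1-p^2}\equiv 1\pmod{\mathfrak{p}}$, i.e.\ $\theta$ lies in the norm-one (cyclic, order $p+1$) subgroup of $(\mathcal{O}_M/\mathfrak{p})^\times$. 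A type II obstruction at $q$ gives, via \eqref{eq:2.3}, that $\alpha_M$, and therefore $\theta$, is a $q$-th power mod $\mathfrak{p}$; as $q\mid p+1$ and $q>2$ (so $q\nmid p-1$), this forces the order of $\theta$ mod $\mathfrak{p}$ to divide $k:=(p+1)/q$, i.e.\ $\mathfrak{p}$ divides the ideal $(\theta_0^k-\theta_1^k)$ where $\theta=\theta_0/\theta_1$ with $\theta_0,\theta_1\in\mathcal{O}_M$. Since $\theta$ is not a root of unity, $\theta_0^k-\theta_1^k\neq 0$ and $|\mathcal{N}_{M/\QQ}(\theta_0^k-\theta_1^k)|\ll C^k$ for a constant $C$ depending only on $K$ and $\alpha$; hence this ideal has $\BigO{k}$ distinct prime divisors, so it is divisible by a prime above at most $\BigO{k}$ rational primes $p$. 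With $k=(p+1)/q\le\sqrt{X}/\log X$, summing gives $\sum_{k\le\sqrt{X}/\log X}\BigO{k}=\BigO{X/\log^2 X}$, which is the first display; combined with the previous paragraph, it gives the second.

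The only step I expect to need real care is the type II count, which is precisely the novelty flagged in the paper. The type I bound is Hooley's classical argument applied to the rational integer $\mathcal{N}_{K/\QQ}(\alpha)$; for type II one must instead identify $\theta=\mathcal{N}_{K/M}(\alpha)/\sigma(\mathcal{N}_{K/M}(\alpha))$ as the object to track, verify it lands in the order-$(p+1)$ subgroup of $(\mathcal{O}_M/\mathfrak{p})^\times$, and exploit that, being a fixed non-torsion element of $M$, the algebraic integer $\theta_0^k-\theta_1^k$ has absolute norm growing only exponentially in $k$ — which is exactly what keeps $\sum_{k\le\sqrt{X}/\log X}\BigO{k}=\BigO{X/\log^2 X}$. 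I would also take care that the ``at most one obstruction per prime in this range'' reduction is applied separately to each type, and that the finitely many ramified or dividing primes really are negligible.
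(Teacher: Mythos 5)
Your proof is correct and follows essentially the same route as the paper: an obstruction at $q\ge\sqrt{X}\log X$ forces $p$ to divide a fixed nonzero quantity whose logarithm is $O(r)$ with $r=(p\mp1)/q\le\sqrt{X}/\log X$, and summing the resulting prime-divisor counts gives $O(X/\log^2X)$. The only (harmless) differences are that for type I you track the rational number $\mathcal{N}_{K/\QQ}(\alpha)\bmod p$ rather than $\alpha\sigma(\alpha)\bmod\mathfrak{P}$, and for type II you work in $M$ with $\alpha_M/\sigma(\alpha_M)$ --- the element directly covered by \eqref{eq-assump_n/cn} --- whereas the paper works in $K$ with $\sigma(\alpha)/\alpha$.
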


\begin{proof}
We write $\alpha = \alpha_1/\alpha_2$ with $\alpha_1,\alpha_2\in\mathcal{O}_K$.

Let $p\in\PP_C(X)$.
Assume first that  $|\mathcal{N}_{K/\QQ}(\alpha)|\neq 1$ and that $p$ has an obstruction of type I at some $q>\sqrt{X}\log X$.
If $\sigma\in C\subset\gal{K/\QQ}$ is the Artin symbol of one of the primes $\mathfrak{P}$ in $K$ above $p$, then
$$
\alpha^{(p^2 - 1)/q} \equiv
\left(\alpha \sigma(\alpha)\right)^{(p-1)/q}\equiv
1 \bmod{\mathfrak{P}}.
$$
Denoting $r = (p-1)/q$, we find that
$$
\mathfrak{P} \mid 
\left( \left(\alpha\sigma(\alpha)\right)^{r} - 1\right).
$$
It follows that
$$
p\mid \mathcal{N}_{K/\QQ}\left(\left(\alpha_1\sigma(\alpha_1)\right)^{r} - \left(\alpha_2\sigma(\alpha_2)\right)^{r}\right).
$$
And so, all primes $p\in\PP_C(X)$ which have a type I obstruction at some $q>\sqrt{X}\log X$ must divide
$$
P_I = 
\prod_{\sigma \in C}\prod_{r \leq \frac{\sqrt{X}}{\log X}}
\mathcal{N}_{K/\QQ}\left(\left(\alpha_1\sigma(\alpha_1)\right)^{r} - \left(\alpha_2\sigma(\alpha_2)\right)^{r}\right)
$$

We assumed that $|\mathcal{N}_{K/\QQ}(\alpha)|\neq 1$.
This implies that $\alpha\sigma(\alpha)$ is not a root of unity (for otherwise $\mathcal{N}_{K/\QQ}(\alpha)^2 = \mathcal{N}_{K/\QQ}(\alpha\sigma(\alpha)) = \pm 1$).
It follows that $P_I\neq 0$.

We now show that there is some $T > 0$ (depending on $\alpha,K$) such that 
$$
\mathcal{N}_{K/\QQ}\left(\left(\alpha_1\sigma(\alpha_1)\right)^{r} - \left(\alpha_2\sigma(\alpha_2)\right)^{r}\right) \ll T^r.
$$
Denote $a = \alpha_1\sigma(\alpha_1)$, $b =  \alpha_2\sigma(\alpha_2)$.
Let $\tau_1,...,\tau_{|G|}$ be the embeddings of $K$ into $\CC$.
Denote by $M$ the maximum over all $|\tau_i(a)|$ and $|\tau_i(b)|$.
Then
\begin{equation*}
\left|\mathcal{N}_{K/\QQ}\left(a^r - b^r\right)\right|
=\prod_{i=1}^{|G|}\left|\tau_i\left(a^r - b^r\right)\right|
\ll 
\prod_{i=1}^{|G|}\left(
\left|\tau_i(a)\right|^r + \left|\tau_i(b)\right|^r \right)
\leq \left(2M\right)^{|G|r},
\end{equation*}
so that we can choose $T = \left(2M\right)^{|G|}$.

It follows that 
$$
\log P_I \ll \sum_{r \leq \frac{\sqrt{X}}{\log X}} r
= \BigO{\frac{X}{\log^2 X}}
$$
where the implied constant depends on $K,\alpha, C$.

And so, since the number of primes dividing $P_I$ is bounded by $\log P_I$, we get that the number of primes $p\in \PP_C(X)$ having a type I obstruction at some $q\geq \sqrt{X}\log X$ is $\BigO{\frac{X}{\log^2 X}}$.

The case of obstructions of type II is similar.
If a prime $p\in\PP_C(X)$ has an obstruction of type II at some $q>\sqrt{X}\log X$, and if $\sigma\in C$ is the Artin of some prime $\mathfrak{P}$ in $K$ above $p$, then
$$
\alpha^{(p^2 - 1)/q} \equiv
\left(\frac{\sigma(\alpha)}{\alpha}\right)^{(p+1)/q}\equiv
1 \bmod{\mathfrak{P}}.
$$
It follows that all $p\in\PP_C(X)$ which have a type II obstruction at some $q>\sqrt{X}\log X$ must divide
$$
P_{II} = 
\prod_{\sigma \in C}\prod_{r \leq \frac{\sqrt{X}}{\log X}}
\mathcal{N}_{K/\QQ}\left(\left(\sigma(\alpha_1)\alpha_2\right)^{r} -\left(\sigma(\alpha_2)\alpha_1\right)^{r}\right).
$$
Our assumption that $\prod_{n\in N}n(\alpha) / \sigma\left(\prod_{n\in N}n(\alpha)\right)$ is not a root of unity implies that $\frac{\alpha}{\sigma(\alpha)}$ is also not a root of unity.
Thus, $P^{II}\neq 0$.
Once more, we find that 
$$
\log P_{II} = \BigO{\frac{X}{\log^2 X}}
$$
which implies that the number of primes $p\in \PP_C(X)$ having a type II obstruction at some $q\geq \sqrt{X}\log X$ is also $\BigO{\frac{X}{\log^2 X}}$.
This completes the proof.
\end{proof}

We now turn our attention to obstructions at small primes.
It is possible for $\alpha$ to have order smaller than $\frac{p^2-1}{f(X)}$ in $\left(\mathcal{O}_K/p\mathcal{O}_K\right)^\times$ without $p$ having an obstruction at any prime $q > f(X)$.
This can be the case if $\alpha$ has multiple obstructions at small primes $q < f(X)$.
We say that $\alpha$ has an obstruction of degree $\ell$ at $q$ if $q^\ell \mid (p^2 - 1)$ and $\alpha$ is a $q^\ell$ power mod $p\mathcal{O}_K$.
We will also say that $p$ has an obstruction at $n$ if it has an obstruction at each prime power dividing $n$.

\begin{lemma}\label{lem:2.7}
Let $f(x)$ be a function tending to infinity with $x$.
There exists a function $g(x)$ tending to infinity with $x$ such that the number of primes $p\in \PP_C(X)$ having an obstruction at some $n>f(X)$ which is $g(X)$ smooth is $o(\li{X})$.
\end{lemma}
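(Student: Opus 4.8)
The plan is to reformulate the event that $p$ has an obstruction at some $g(X)$-smooth $n>f(X)$ as a single inequality, and then to control it by a first-moment (Markov) argument. For $p\in\PP_C(X)$ and a rational prime $q$, let $\ell_q(p)$ be the largest integer $\ell\geq 0$ such that $q^\ell\mid p^2-1$ and $\alpha$ is a $q^\ell$-th power mod $p\mathcal{O}_K$, and set $M_p=\prod_{q\leq g(X)}q^{\ell_q(p)}$ (a divisor of $p^2-1$). Since the set of degrees $\ell$ at which $p$ has an obstruction at $q$ is the interval $\{0,1,\dots,\ell_q(p)\}$, having an obstruction at a $g(X)$-smooth number $n$ is equivalent to $n\mid M_p$; hence $p$ has an obstruction at some $g(X)$-smooth $n>f(X)$ if and only if $M_p>f(X)$. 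Markov's inequality then reduces the problem to bounding a first moment:
\begin{align*}
\#\SET{p\in\PP_C(X)\;\middle|\;M_p>f(X)}
&\leq\frac{1}{\log f(X)}\sum_{p\in\PP_C(X)}\log M_p\\
&=\frac{1}{\log f(X)}\sum_{q\leq g(X)}(\log q)\sum_{\ell\geq 1}\#\SET{p\in\PP_C(X)\;\middle|\;p\text{ has an obstruction at }q^\ell}.
\end{align*}

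To estimate the inner counts I would rerun the proof of \autoref{lem:2.4} with the prime power $q^\ell$ in place of $q$. An obstruction of degree $\ell$ at $q$ is of type I or of type II, and in either case it pins the Artin symbol $\left(\frac{L/\QQ}{p}\right)$ down to one explicit element of a field $L=L^{I}_{q^\ell}$ or $L=L^{II}_{q^\ell}$, constructed exactly as $L^{I}_q,L^{II}_q$ were there but with every exponent $1/q$ replaced by $1/q^\ell$. The proof of \autoref{lem:2.2} applies verbatim to give $\log|\Delta_L|\ll[L:\QQ]\,\ell\log q$, and I may assume $\ell\log q\ll\log X$ (otherwise $q^\ell>p^2-1$ and the count is $0$), so the error term in \autoref{thm:2.1} is again $\BigO{\sqrt{X}\log X}$. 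Moreover the hypothesis \eqref{eq-assump_n/cn} together with Kummer theory gives $[L^{II}_{q^\ell}:\QQ]\gg q^{2\ell}$, and, when $|\mathcal{N}_{K/\QQ}(\alpha)|\neq 1$, also $[L^{I}_{q^\ell}:\QQ]\gg q^{2\ell}$, with implied constants independent of $q$ and $\ell$ (a fixed non-root-of-unity element of a number field is a perfect $m$-th power in cyclotomic extensions for only boundedly many $m$, so the Kummer degree drops by at most a bounded factor). \autoref{thm:2.1} then yields $\#\SET{p\in\PP_C(X)\;\middle|\;p\text{ has an obstruction at }q^\ell}\ll\li{X}/q^{2\ell}+\sqrt{X}\log X$. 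In the one remaining situation — $|\mathcal{N}_{K/\QQ}(\alpha)|=1$ and a type I obstruction at $q^\ell$ — the field $L^{I}_{q^\ell}$ degenerates to $\QQ(\zeta_{q^\ell})$, but then the obstruction merely says $q^\ell\mid p-1$, and \nameref{thm:2.3} (splitting the range $q^\ell\leq X$ at $\sqrt{X}$ and at $X/\log^2 X$) bounds the number of such $p\leq X$ by $\ll\li{X}/q^\ell$ up to a contribution which, summed over all $q\leq g(X)$ and all $\ell$, is $o(X/\log X)$.

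Summing over $\ell\geq 1$ (only $\BigO{\log X/\log q}$ values of $\ell$ contribute) now gives, for each $q\leq g(X)$,
\[
\sum_{\ell\geq 1}\#\SET{p\in\PP_C(X)\;\middle|\;p\text{ has an obstruction at }q^\ell}\ll\frac{\li{X}}{q}+\frac{\sqrt{X}\log^2 X}{\log q},
\]
the term $\li{X}/q$ coming from the type I sum in the norm-one case (when $|\mathcal{N}_{K/\QQ}(\alpha)|\neq 1$ one gets the stronger $\li{X}/q^2$). Hence
\[
\sum_{p\in\PP_C(X)}\log M_p\ll\li{X}\sum_{q\leq g(X)}\frac{\log q}{q}+g(X)\,\sqrt{X}\log^2 X\ll\li{X}\log g(X)+o(X/\log X),
\]
so that $\#\SET{p\in\PP_C(X)\;\middle|\;M_p>f(X)}\ll\li{X}\log g(X)/\log f(X)+o(\li{X})$. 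It then remains only to choose $g$ tending to infinity slowly enough that $\log g(X)=o(\log f(X))$ while $g(X)$ stays subpolynomial in $X$; for instance $g(X)=\min\big(\lfloor\log f(X)\rfloor,\lfloor\log\log X\rfloor\big)+2$ works, since then $g(X)\to\infty$, $g(X)\leq\log\log X+2$ (so $g(X)\sqrt{X}\log^2 X=o(X/\log X)$) and $\log g(X)=o(\log f(X))$. With this choice the whole bound is $o(\li{X})$, which is the assertion of the lemma. The only input that goes beyond \autoref{lem:2.4} is the uniformity in $q$ and $\ell$ of the Kummer degree bounds $[L^{I}_{q^\ell}:\QQ],[L^{II}_{q^\ell}:\QQ]\gg q^{2\ell}$, which I expect to be the main point to check with care; the accompanying nuisance is the degeneracy of the type I field when $|\mathcal{N}_{K/\QQ}(\alpha)|=1$, which forces the weaker per-prime bound $\li{X}/q$ and is the reason $g$ must be taken to grow more slowly than every fixed positive power of $f$.
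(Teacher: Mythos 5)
Your argument is correct in its essentials but follows a genuinely different route from the paper's. The paper does not touch the power-residue information at all in this lemma: from an obstruction at a $g(X)$-smooth $n>f(X)$ it extracts a single $g(X)$-smooth divisor $m$ with $f(X)\leq m<f(X)g(X)$, uses only $m\mid p^2-1$ together with \nameref{thm:2.3} to bound the count for each $m$ by $\ll Xd(m)/(\varphi(m)\log X)$, and sums over this short range of $m$ by Rankin's trick; the whole lemma is therefore unconditional and needs no Chebotarev or Kummer input. You instead package the smooth obstructions into the modulus $M_p$ and run a first-moment (Markov) bound on $\log M_p$, reducing to counting obstructions at individual prime powers $q^{\ell}$; your reformulation ($p$ has an obstruction at some $g$-smooth $n>f(X)$ iff $M_p>f(X)$) and the Markov step are correct, and the approach buys a much weaker constraint on $g$ (only $\log g(X)=o(\log f(X))$, versus the paper's requirement that $\prod_{p\leq g(X)}(1+\cdots)$ be $o(f(X)^{1/2})$). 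However, you make the per-prime-power counts harder than they need to be: the GRH/Chebotarev bound $\ll\li{X}/q^{2\ell}$ requires extending the type-II construction of \autoref{lem:2.4} to prime powers, where the uniqueness of $\tilde\rho$ uses $q$ odd (so $q=2$, which is now in range, needs separate treatment you do not give), and the uniform Kummer degree bounds $\gg q^{2\ell}$ are genuinely nontrivial (they amount to height lower bounds in the abelian extensions $M(\zeta_{q^{\ell}})$). None of this is necessary: the Brun--Titchmarsh bound $\#\{p\leq X : q^{\ell}\mid p^2-1\}\ll\li{X}/q^{\ell}$ (trivially $\ll\sqrt{X}$ once $q^{\ell}>\sqrt{X}$) already gives $\sum_{p}\log M_p\ll\li{X}\sum_{q\leq g(X)}\log q/(q-1)\ll\li{X}\log g(X)$ by Mertens, which is all your Markov step requires. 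With that simplification your proof is complete, elementary, and unconditional, matching the paper in strength while avoiding both Rankin's trick and the Kummer-theoretic uniformity issues you yourself flagged as the delicate point.
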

\begin{proof}
Let $g(X)$ be some number (to be determined later).
Denote by $S$ the number of primes $p\in \PP_C(X)$ which have an obstruction at some $n>f(X)$ which is $g(X)$ smooth, and let $p$ be such a prime.
Since $p$ has an obstruction at all divisors of $n$, and since $n$ is $g(X)$ smooth, it follows that $p$ has an obstruction at some $f(X) \leq m < f(X)g(X)$ which is $g(X)$ smooth.
We can assume that $f(X)$ and $g(X)$ tend to infinity sufficiently slowly such that $f(X)g(X) < X^\theta$ for some $\theta < 1$.

Since $p$ has an obstruction at $m$, we must have $m\mid (p^2 - 1)$.
Using \nameref{thm:2.3}, we can bound the number of such $p$'s by
$$
 \#\SET{p\leq X \;\middle|\; m\mid p^2 - 1}
\ll \frac{Xd(m)}{\varphi(m) \log (X/m)}
$$
where $d(\cdot)$ is the divisor function.

It follows that the amount of $p\in \PP_C(X)$ having an obstruction at some $n > f(X)$ which is $g(X)$ smooth is bounded by
$$
S \ll \sum_{\substack{f(X) \leq  m < f(X)g(X) \\ m \text{ is }g(X)\text{ smooth}}}\frac{X d(m)}{\varphi(m) \log (X/m)}.
$$

Since $f(X)g(X)<X^\theta$, we have $\log(X/m) \gg \log(X)$, so that 
$$
S \ll
\frac{X}{\log X}
\sum_{\substack{f(X) \leq m < f(X)g(X) \\ m \text{ is }g(X)\text{ smooth}}}\frac{ d(m)}{\varphi(m)}.
$$
We can crudely bound this sum using Rankin's trick (see for example \cite{Ramaré2022}).
we have
\begin{multline*}
S 
\ll 
\frac{X}{\log X}\sum_{m \text{ is }g(X)\text{ smooth}}
\left(\frac{m}{{f(X)}}\right)^{1/2}\frac{d(m)}{\varphi(m)}
\ll \\
\frac{X}{(f(X))^{1/2}\log X}\prod_{p\leq g(X)}\left( 1 + \frac{p(2p^{1/2} - 1)}{(p-1)(p^{1/2} - 1)^2}\right).
\end{multline*}
We can choose $g(x)$ tending to infinity with $x$ sufficiently slowly in terms of $f(x)$ such that
$$
\frac{1}{(f(X))^{1/2}}\prod_{p\leq g(X)}\left(1 + \frac{p(2p^{1/2} - 1)}{(p-1)(p^{1/2} - 1)^2}\right) \xrightarrow{X\rightarrow \infty} 0.
$$
With such a choice we have $S = o(\li{X})$ as required.
\end{proof}

We are now ready to prove \autoref{thm:1.4}.
\begin{proof}
Let $f(x)$ be a function tending to infinity with $x$.
From \autoref{lem:2.7} we know that there is a function $g(x)$ tending to infinity with $x$ such that the number of primes $p\in \PP_C(X)$ having obstructions at some $n>f(X)$ which is $g(X)$ smooth is $o(\li{X})$.
In other words, the number of $p\in \PP_C(X)$ whose only prime obstructions are at primes $q \leq g(X)$ is  $o(\li{X})$.

If $|\mathcal{N}_{K/\QQ}(\alpha)|\neq 1$, then from \autoref{lem:2.4}, \autoref{lem:2.5}, \autoref{lem:2.6} we know that the number of  $p\in \PP_C(X)$ having obstructions at any $q > g(X)$ is $o(\li{X})$.
From this it follows that the number of $p\in \PP_C(X)$ with $\ord{p}{\alpha} \leq \frac{p^2 - 1}{f(X)}$ is $o(\li{X})$.

If $\left|\mathcal{N}_{\QQ/K}(\alpha)\right| = 1$, then from \autoref{lem:2.4}, \autoref{lem:2.5}, \autoref{lem:2.6} we know that the number of  $p\in \PP_C(X)$ having obstructions of type II at any $q > g(X)$ is $o(\li{X})$.
From this it follows that the number of $p\in \PP_C(X)$ with $\ord{p}{\alpha} \leq \frac{p+1}{f(X)}$ is $o(\li{X})$.
\end{proof}


\section{The Conjecture for Matrices }\label{sec:3}

In the previous sections we considered Artin's primitive root conjecture in the number field setting.
That is, we looked at a number field $K$ and some $\alpha\in K$, and we were interested in $\ord{p}{\alpha}$ for various rational primes $p$.

In this section we consider another variation of Artin's primitive root conjecture, that of rational matrices.
For a rational matrix $A\in\mathrm{GL}_n(\QQ)$ we can consider the order of $A\bmod p$ in $\mathrm{GL}_n(\FF_p)$ for various rational primes $p$.
We prove an analogue of \autoref{thm:1.4}:

\begin{theorem}\label{cor:3.2}
Let $A\in \mathrm{GL}_{n}(\QQ)$.
Denote by $f_A(t)\in\QQ[t]$ the characteristic polynomial of $A$.
Assume that $f_A(t)$ is irreducible.
Denote by $K$ the splitting field of $f_A$ over $\QQ$, and write $G = \gal{K/\QQ}$.
Let $C\subset G$ be some conjugacy class of elements of order 2.
Assume that there is a normal subgroup $N\triangleleft G$ such that $C\not\subset N$ and the image of $C$ in $G/N$ is in the center of $G/N$.
Assume also that for any eigenvalue $\alpha$ of $A$ and any $c\in C$,
$$
\left.\prod_{n\in N}n(\alpha) \middle/
c\left(\prod_{n\in N}n(\alpha) \right)
\right.
$$
is not a root of unity.
Then assuming GRH, for every function $f(x)$ tending to infinity with $x$, we have the following:

{\setlength{\parindent}{0cm}
If $\mathrm{det}(A)\neq \pm 1$ then
$$
\#\SET{p\in \PP_{C}(X) \;\middle| \; \ord{p}{A}\leq \frac{p^2 - 1}{f(X)}} = o(\li{X}).
$$
If $\mathrm{det}(A) = \pm1$ then
$$
\#\SET{p\in \PP_{C}(X) \;\middle| \; \ord{p}{A}\leq \frac{p + 1}{f(X)}} = o(\li{X}).
$$
}
\end{theorem}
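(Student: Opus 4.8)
The plan is to reduce \autoref{cor:3.2} to \autoref{thm:1.4} by showing that, for a fixed root $\alpha\in K$ of $f_A$ and for all primes $p$ outside a finite set $S$, the order $\ord{p}{A}$ equals the order of $\alpha$ in $\left(\mathcal{O}_K/p\mathcal{O}_K\right)^\times$, i.e.\ the quantity $\ord{p}{\alpha}$ appearing in \autoref{thm:1.4}.

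First I would establish this identity. Since $f_A$ is irreducible over $\QQ$ it is separable and equals the minimal polynomial of $A$, so $A$ is non-derogatory. Write $\alpha=\alpha_1/\alpha_2$ with $\alpha_1,\alpha_2\in\mathcal{O}_K$, and let $S$ consist of the primes dividing $\det A$, the primes occurring in a denominator of an entry of $A$, the primes for which $f_A\bmod p$ is inseparable, the primes ramifying in $K$, and the primes dividing $\mathcal{N}_{K/\QQ}(\alpha_1\alpha_2)$; this is a finite set. For $p\notin S$ the reduction $A\bmod p\in\mathrm{GL}_n(\FF_p)$ is well defined with separable characteristic polynomial $f_A\bmod p$, hence is conjugate over $\FF_p$ to the companion matrix $C_{f_A\bmod p}$; thus $\ord{p}{A}$ is the order of $t$ in $\left(\FF_p[t]/(f_A\bmod p)\right)^\times$, namely the least common multiple of the multiplicative orders of the roots of $f_A\bmod p$ in $\overline{\FF_p}$. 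Fixing a prime $\mathfrak{P}_1$ of $K$ above $p$, the residues $\sigma(\alpha)\bmod\mathfrak{P}_1$, $\sigma\in G$, are precisely the roots of $f_A\bmod p$, and $\mathrm{ord}\bigl(\sigma(\alpha)\bmod\mathfrak{P}_1\bigr)=\mathrm{ord}\bigl(\alpha\bmod\sigma^{-1}\mathfrak{P}_1\bigr)$; since $\sigma^{-1}\mathfrak{P}_1$ runs over all primes of $K$ above $p$, this least common multiple equals $\mathrm{lcm}_{\mathfrak{P}\mid p}\mathrm{ord}(\alpha\bmod\mathfrak{P})=\ord{p}{\alpha}$, the last equality from $\left(\mathcal{O}_K/p\mathcal{O}_K\right)^\times\cong\prod_{\mathfrak{P}\mid p}\left(\mathcal{O}_K/\mathfrak{P}\right)^\times$ (valid because $p$ is unramified in $K$). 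Hence $\ord{p}{A}=\ord{p}{\alpha}$ for every $p\notin S$.

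Next I would match the hypotheses. The field $K$, the group $G=\gal{K/\QQ}$, the conjugacy class $C$ of elements of order $2$, and the normal subgroup $N$ are literally the same objects in \autoref{cor:3.2} and \autoref{thm:1.4}, and the condition that \eqref{eq-assump_n/cn} is not a root of unity for our $\alpha$ is one of the hypotheses of \autoref{cor:3.2}. By transitivity of the norm, $\left|\mathcal{N}_{K/\QQ}(\alpha)\right|=\left|\mathcal{N}_{\QQ(\alpha)/\QQ}(\alpha)\right|^{[K:\QQ(\alpha)]}=\left|\det A\right|^{[K:\QQ(\alpha)]}$, so $\left|\mathcal{N}_{K/\QQ}(\alpha)\right|=1$ if and only if $\det A=\pm1$; and for $p\in\PP_C(X)\setminus S$ the Artin symbol $\left(\frac{K/\QQ}{p}\right)=C$ consists of involutions, so $p\mathcal{O}_K$ factors into primes of inertia degree $2$ and $\exp{\left(\mathcal{O}_K/p\mathcal{O}_K\right)^\times}=p^2-1$, precisely the configuration of \autoref{thm:1.4}.

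Finally, applying \autoref{thm:1.4} to $K$, $\alpha$, $N$, $C$ shows, under GRH, that the number of $p\in\PP_C(X)$ with $\ord{p}{\alpha}\le\frac{p^2-1}{f(X)}$ (in case $\det A\neq\pm1$), respectively $\ord{p}{\alpha}\le\frac{p+1}{f(X)}$ (in case $\det A=\pm1$), is $o(\li{X})$; since $\ord{p}{A}=\ord{p}{\alpha}$ off the finite set $S$, the same bound holds with $\ord{p}{A}$ in place of $\ord{p}{\alpha}$, which is exactly the statement of \autoref{cor:3.2}. The one step that is not an immediate invocation of \autoref{thm:1.4} is the reduction-theoretic identity $\ord{p}{A}=\ord{p}{\alpha}$ for good $p$ — making the exceptional set $S$ precise and checking that the order of $A\bmod p$ is genuinely read off from the residues of $\alpha$ at the primes of $K$ above $p$ — and this is the step I expect to demand the most care, although it is routine.
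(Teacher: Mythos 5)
Your proposal is correct and follows the same overall strategy as the paper: reduce the matrix statement to \autoref{thm:1.4} by proving that $\ord{p}{A}=\ord{p}{\alpha}$ for all but finitely many $p$, where $\alpha$ is a root of $f_A$. The difference lies in how that identity is established. The paper proves a more general statement (\autoref{prop:mat}, valid for arbitrary $A\in\mathrm{GL}_n(\QQ)$, including non-diagonalizable $A$, where an extra factor of $p$ appears) by passing to the Jordan form $A=U^{-1}JU$ over $K$ and carefully controlling the denominators of $U$, $J$ and $A$ so that the congruence $A^k\equiv I\bmod p$ can be transported to $J^k\equiv I\bmod p$ and back. Your argument instead exploits the irreducibility hypothesis from the start: $f_A$ is then the minimal polynomial, so $A$ is conjugate over $\QQ$ to the companion matrix, the reduction mod $p$ is conjugate over $\FF_p$ to the companion matrix of $f_A\bmod p$ for good $p$, and $\ord{p}{A}$ is read off as the order of $t$ in $\left(\FF_p[t]/(f_A\bmod p)\right)^\times$, which you correctly identify with $\ord{p}{\alpha}$ via the Galois action on the primes above $p$ and the CRT decomposition of $\left(\mathcal{O}_K/p\mathcal{O}_K\right)^\times$. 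This is more elementary and avoids the denominator bookkeeping, at the cost of not covering the derogatory case the paper's lemma also handles (which is not needed for this theorem). Your explicit verification that $\left|\mathcal{N}_{K/\QQ}(\alpha)\right|=1$ if and only if $\det A=\pm1$, via transitivity of the norm, is a detail the paper leaves implicit and is correct. I see no gap.
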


\subsection{Equivalence of matrices and number fields}
Let $A\in \mathrm{GL}_{n}(\QQ)$.
We denote by $f_A(t)$ the characteristic polynomial of $A$.
For almost all rational primes $p$, the matrix $A\bmod{p}$ is in $\mathrm{GL}_{n}(\FF_p)$.
We can then consider $\ord{p}{A}$, the order of $A\bmod{p}$ in  $\mathrm{GL}_{n}(\FF_p)$.

If we assume that $f_A(t)$ is irreducible, we show that for almost all $p$, $\ord{p}{A}$ depends only on the polynomial $f_A(t)$.
As a result, we get that the problem of understanding $\ord{p}{A}$ is equivalent to a question about orders of algebraic numbers.
That is, we show that $\ord{p}{A}$ is equal to the order of any root $\alpha\in K^\times$ of $f_A(t)$ in $\left(\mathcal{O}_K/p\mathcal{O}_K\right)^\times$, where $K$ is the splitting field of $f_A(t)$.
We can then use the results regarding number fields and apply them for matrices as well.

We begin by proving the following theorem, relating the order of a matrix $A$ to the order of its eigenvalues:

\begin{theorem}\label{prop:mat}
Let $A\in \mathrm{GL}_{n}(\QQ)$.
Denote by $f_A(t)$ the characteristic polynomial of $A$.
Denote by $K$ the splitting field of $f_A(t)$, and let $\alpha_1,...,\alpha_n$ be the roots of $f_A(t)$ in $K$.
Then for all sufficiently large rational primes $p$ the following holds:

{\setlength{\parindent}{0cm}
If $A$ is diagonalizable over $K$ then
$$
\ord{p}{A} = \mathrm{LCM}\left( \SET{\ord{p}{\alpha_i}}_{i=1,2,...,n}\right).
$$
If $A$ is not diagonalizable over $K$ then 
$$
\ord{p}{A} = p\times \mathrm{LCM}\left( \SET{\ord{p}{\alpha_i}}_{i=1,2,...,n}\right).
$$
}
\end{theorem}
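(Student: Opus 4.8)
The plan is to move the computation of $\ord{p}{A}$, which a priori takes place in $\mathrm{GL}_n(\FF_p)$, to the residue fields of $K$ at the primes above $p$, where Jordan normal form is available. Let $S$ be the finite set of ``bad'' rational primes: those for which $A\bmod p\notin\mathrm{GL}_n(\FF_p)$, those ramified in $K$, those under which some root $\alpha_j$ fails to be a unit, those dividing certain minors and eigenvalue-differences specified below, and those with $p\le n$. Fix $p\notin S$. Since the entrywise inclusion $\mathrm{GL}_n(\FF_p)\hookrightarrow\mathrm{GL}_n(\mathcal{O}_K/p\mathcal{O}_K)$ is an injective group homomorphism, $\ord{p}{A}$ equals the order of $A\bmod p$ in $\mathrm{GL}_n(\mathcal{O}_K/p\mathcal{O}_K)$. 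Writing $p\mathcal{O}_K=\mathfrak{P}_1\cdots\mathfrak{P}_g$ — unramified, with all residue fields equal to a single $\FF_{p^f}$ because $K/\QQ$ is Galois — the Chinese Remainder Theorem gives $\mathrm{GL}_n(\mathcal{O}_K/p\mathcal{O}_K)\cong\prod_{i=1}^g\mathrm{GL}_n(\FF_{p^f})$, so
\[
\ord{p}{A}=\mathrm{LCM}_{1\le i\le g}\,\big(\text{order of }A\bmod\mathfrak{P}_i\text{ in }\mathrm{GL}_n(\FF_{p^f})\big),
\]
and in the same way $\ord{p}{\alpha_j}=\mathrm{LCM}_{1\le i\le g}\,\big(\text{order of }\alpha_j\bmod\mathfrak{P}_i\text{ in }\FF_{p^f}^\times\big)$ for each root $\alpha_j$. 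As $\mathrm{LCM}$ is associative and commutative, it suffices to establish the asserted identity one prime at a time — that is, to express the order of $\bar A:=A\bmod\mathfrak{P}$ in $\mathrm{GL}_n(\FF_{p^f})$ through the orders of $\bar\alpha_j:=\alpha_j\bmod\mathfrak{P}$ in $\FF_{p^f}^\times$ for a single $\mathfrak{P}=\mathfrak{P}_i$ — and then reassemble; in the non-diagonalizable case the extra factor $p$ survives the $\mathrm{LCM}$ over $i$ as a single factor $p$, since $\mathrm{LCM}_i(p\,d_i)=p\,\mathrm{LCM}_i(d_i)$ whenever $p\nmid d_i$, and indeed $p\nmid\ord{p}{\alpha_j}$ because $\ord{p}{\alpha_j}\mid p^f-1$.

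Fix $\mathfrak{P}$ and work over $\FF_{p^f}=\mathcal{O}_K/\mathfrak{P}$, which contains every $\bar\alpha_j$ since $K$ is the splitting field of $f_A$; thus $\bar A$ is conjugate over $\FF_{p^f}$ to a Jordan form $\bigoplus_\ell J_{k_\ell}(\bar\beta_\ell)$. The key claim is that, for $p$ outside a finite set, the Jordan type of $\bar A$ coincides with the Jordan type of $A$ over $\overline{\QQ}=K$; in particular $\bar A$ is diagonalizable over $\FF_{p^f}$ if and only if $A$ is diagonalizable over $K$. For each eigenvalue $\alpha$ of $A$ the Jordan type is determined by the numbers $\dim_K\ker(A-\alpha I)^s=n-\mathrm{rank}\,(A-\alpha I)^s$ for $1\le s\le n$; a rank equal to $r$ is certified by a nonzero $r\times r$ minor of $(A-\alpha I)^s$, which is an element of $K^\times$ and hence divisible by only finitely many primes, while reduction mod $\mathfrak{P}$ can never raise the rank. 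So, excluding the finitely many primes dividing any of these finitely many minors, and those dividing $\alpha_j-\alpha_{j'}$ for distinct eigenvalues $\alpha_j\ne\alpha_{j'}$, all of these ranks — hence the full Jordan type — are preserved by reduction mod $\mathfrak{P}$.

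It remains to compute the order of a single Jordan block in $\mathrm{GL}_k(\FF_{p^f})$. Write $J_k(\bar\beta)=\bar\beta\,(I_k+N)$ with $N$ the standard nilpotent block; since $\bar\beta I$ is central, $J_k(\bar\beta)^m=I$ forces $\bar\beta^m=1$ (diagonal entries) and then $(I_k+N)^m=I$. As $I,N,\dots,N^{k-1}$ are linearly independent over $\FF_{p^f}$ and $(I+N)^m=\sum_{j=0}^{k-1}\binom{m}{j}N^j$, one has $(I+N)^m=I$ if and only if $p\mid\binom{m}{j}$ for $1\le j\le k-1$; using $(1+t)^p\equiv1+t^p\pmod p$ together with $k\le n<p$, this yields $\mathrm{ord}(I+N)=1$ for $k=1$ and $\mathrm{ord}(I+N)=p$ for $k\ge2$. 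Hence the order of $J_k(\bar\beta)$ is $\mathrm{ord}(\bar\beta)$ when $k=1$ and $p\cdot\mathrm{ord}(\bar\beta)$ when $k\ge2$ (again $p\nmid\mathrm{ord}(\bar\beta)$ since $\mathrm{ord}(\bar\beta)\mid p^f-1$), and taking the $\mathrm{LCM}$ over all blocks of $\bar A$ gives: the order of $\bar A$ in $\mathrm{GL}_n(\FF_{p^f})$ is $\mathrm{LCM}_j\mathrm{ord}(\bar\alpha_j)$ if $A$ is diagonalizable over $K$, and $p\cdot\mathrm{LCM}_j\mathrm{ord}(\bar\alpha_j)$ otherwise. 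Combining with the reduction of the first paragraph completes the proof. I expect the main obstacle to be the Jordan-type-preservation claim of the second paragraph — in particular, making ``for all sufficiently large $p$'' uniform over the finitely many eigenvalues, exponents $s$, and minors — together with the bookkeeping of collecting all the excluded primes into a single finite set $S$; the remaining steps are routine linear algebra over finite fields.
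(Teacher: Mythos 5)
Your proof is correct, but it takes a genuinely different route from the paper's. The paper never reduces the Jordan decomposition itself modulo $p$: it fixes, once and for all over $K$, a conjugation $A=U^{-1}JU$ with $U\in M_n(\mathcal{O}_K)$ and denominators $d_A,d_J$ clearing $A$ and $J$, excludes the finitely many $p$ dividing $d_A$, $d_J$, $\det U$ or the roots $\alpha_i$, and then shows that $A^k\equiv I\bmod p\mathcal{O}_K$ holds if and only if $J^k\equiv I\bmod p\mathcal{O}_K$ does — the key trick being that the conjugated error matrix lies both in $M_n\left(\mathcal{O}_K\left[\det(U)^{-1},d_A^{-1}\right]\right)$ and in $M_n\left(\mathcal{O}_K\left[p^{-1}\right]\right)$, hence in $M_n(\mathcal{O}_K)$. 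The diagonal of $J^k$ then gives $\alpha_i^k\equiv 1$ and the superdiagonal gives $p\mid\binom{k}{1}=k$, exactly as in your Jordan-block computation, but entirely inside $\mathcal{O}_K/p\mathcal{O}_K$, with no Chinese Remainder decomposition and, crucially, no need to determine the Jordan type of $A\bmod\mathfrak{P}$. Your approach instead reduces first and recomputes the Jordan form over $\FF_{p^f}$, which is why you must prove the Jordan-type-preservation claim (via minors certifying ranks, semicontinuity of rank under reduction, and separation of the eigenvalues modulo $\mathfrak{P}$); that lemma is correct and standard, but it is precisely the step you flag as the main obstacle, and the paper's denominator-tracking argument sidesteps it entirely, since it only needs $A$ and $J$ to remain conjugate modulo $p$ rather than $J\bmod\mathfrak{P}$ to be the Jordan form of $A\bmod\mathfrak{P}$. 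What your route buys in exchange is more information (the actual Jordan type over each residue field) and a transparent local picture of where the factor $p$ comes from; what the paper's route buys is a shorter list of excluded primes and no rank-preservation lemma.
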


\begin{proof}
Let $J$ be the Jordan form of $A$, and write
\begin{equation}\label{eq:AJ}
A = U^{-1}JU.
\end{equation}
Assume further that $U$ is chosen such that $U\in M_n(\mathcal{O}_K)$.

We choose some natural number $d_A\in\NN$ such that $d_AA\in M_n(\ZZ)$. 
Similarly, we choose $d_J\in\NN$ such that $d_J J \in M_n(\mathcal{O}_K)$.

Let $p$ be a rational prime.
We will assume that $p$ is large enough so that $p$ does not divide $d_A, d_J$, and that $p\OK$ is prime to all $K$-primes dividing $\det(U)$ and all roots $\alpha_i$.

We denote 
$$
\ell = \mathrm{LCM}\left( \SET{\ord{p}{\alpha_i}}_{i=1,2,...,n}\right).
$$

We first prove that $\ell\mid \ord{p}{A}$, and that if $J$ is not diagonal then $p\ell \mid \ord{p}{A}$.
That is, if we denote $k = \ord{p}{A}$ then we wish to show that $\ord{p}{\alpha_i}\mid k$ for all $i\leq n$, and that if $J$ is not diagonal then $p\mid k$ as well.

By definition of $k$, we have that $A^k\equiv I \bmod p$.
That is, there is some matrix $B\in M_n(\ZZ)$ such that
\begin{equation*}
d_A^k A^k = d_A^k I + p B.
\end{equation*}
Using \eqref{eq:AJ} this implies that 
\begin{equation*}
d_A^kU^{-1}J^kU = d_A^K I + pB.
\end{equation*}
Thus, we get that
\begin{equation}\label{eq:Jk}
d_J^k J^k = d_J^k I + p \left(\frac{d_J}{d_A}\right)^kUBU^{-1}.
\end{equation}
Denote 
$$
C = \left(\frac{d_J}{d_A}\right)^kUBU^{-1}.
$$
On the one hand, we see that 
$$
C\in M_n\left(
\OK\left[\det(U)^{-1}, d_A^{-1}\right]
\right).
$$
On the other hand, since $C = \frac{1}{p}\left(d_J^k J^k - d_J^k I\right)$, we see that
$$
C\in M_n\left(\OK[p^{-1}]\right).
$$
Since $p$ is prime to $\det(U), d_A,$ it follows that $C$ is in $M_n(\OK)$.

Equation \eqref{eq:Jk} then implies that $J^k\equiv I \bmod p$.
In particular, if we consider the main diagonal of this equation, we see that $\alpha_i^k \equiv 1 \bmod p$ for all $i\leq n$.
This proves that $\ell\mid k$.

Assume now that $J$ is not a diagonal matrix.
This means that $J$ contains a block of the form
$$
J(\lambda) = 
\begin{pmatrix}
\lambda & 1 & 0 &\dots & 0 \\
0 & \lambda & 1 &\dots & 0 \\
\vdots & \vdots & \ddots & \vdots & \vdots\\
0 & 0 & \dots & \lambda & 1 \\
0 & 0 & \dots & 0 & \lambda
\end{pmatrix}.
$$
When looking at $J^k$, this block then has the form
$$
\begin{pmatrix}
\lambda^k & \binom{k}{1} \lambda^{k-1} & \binom{k}{2} \lambda^{k-2} &\dots \\
0 & \lambda^k & \binom{k}{1} \lambda^{k-1} &\dots \\
\vdots & \vdots & \ddots & \vdots\\
0 & \dots & 0 & \lambda^k
\end{pmatrix}.
$$
And so, in the equation $J^k\equiv I \bmod p$, we consider an element of the superdiagonal of this block.
We see that 
$$
\binom{k}{1}\lambda^{k-1} \equiv 0 \bmod p.
$$
Since $\lambda = \alpha_m$ for some $m$, and we assumed that $p$ is prime to all roots $\alpha_i$, it follows that we must have $p\mid \binom{k}{1} = k$.
Since $\ell \mid \left|\left(\OK/p\OK\right)^\times\right|$, it follows that $\ell$ is prime to $p$.
Thus, $p\mid k$ and $\ell \mid k$ imply $p\ell\mid k$ as required.

We now prove the converse.
Denote $t = \ell$ if $J$ is diagonal and $t = p\ell$ otherwise.
We wish to show that $\ord{p}{A}\mid t$.
That is, we want to show that $A^t\equiv I \bmod p$.

We first note that $t = \ord{p}{J}$, so that we have 
\begin{equation*}
d_J^t J^t \equiv  d_J^t I + p B
\end{equation*}
for some $B\in M_n(\OK)$.
From \eqref{eq:AJ} we get
\begin{equation*}
d_J^t U A^t U^{-1} = d_J^t I + p B,
\end{equation*}
which then gives
\begin{equation}\label{eq:At}
d_A^t A^t = d_A^t I + p \frac{d_A^t}{d_J^t}U^{-1}BU.
\end{equation}
Denoting $C = \frac{d_A^t}{d_J^t}U^{-1}BU$ we see that $C$ is in $M_n\left(\OK\left[ d_J^{-1}, \det(U)^{-1}\right]\right)$.
However, since $C = \frac{1}{p}\left(d_A^t A^t - d_A^t I\right)$, we have $C\in M_n\left(\ZZ[p^{-1}]\right)$ as well.
From our choice of $p$, it follows that $C$ is in $M_n(\ZZ)$.
Thus, \eqref{eq:At} implies that $A^t\equiv I \bmod p$, so that $\ord{p}{A}\mid t$ as required.

\end{proof}

This proposition now allows us to use \autoref{thm:1.4} and apply it to matrices.
As a result, we get \autoref{cor:3.2}:
\begin{proof}[Proof of \autoref{cor:3.2}] 
Let $A\in \mathrm{GL}_n(\QQ)$ be as in the statement of the theorem.
Since $f_A(t)$ is irreducible, it follows that $A$ is diagonalizable over $K$.
Thus, from \autoref{prop:mat} we get that for almost all primes $p$, 
$$
\ord{p}{A} = \mathrm{LCM}(\ord{p}{\alpha_1},...,\ord{p}{\alpha_n}),
$$
where $\alpha_i$ are the roots of $f_A$.
Since the Galois group $G$ acts transitively on these roots, we have that all of the $\ord{p}{\alpha_i}$ are equal.
And so, for almost all primes $p$ we have $\ord{p}{A} = \ord{p}{\alpha_1}$.

Applying \autoref{thm:1.4} to $\alpha_1$ then gives the required result about $A$.
\end{proof}

\bibliographystyle{plain}
\bibliography{my_bib}

\end{document}